\DeclareMathSymbol{\R}{\mathalpha}{AMSb}{"52}
\newcommand{\N}{\mathbb{N}}
\newcommand{\F}{\mathcal{F}}
\newcommand{\E}{\mathbb{E}}
\renewcommand{\P}{\mathbb{P}}
\newcommand{\one}[1]{\mathds{#1}}
\newcommand{\weaklyto}{\rightharpoonup}
\theoremstyle{plain}
\newtheorem{anytheorem}{Theorem}[section] 
\newtheorem{theorem}[anytheorem]{Theorem}
\newtheorem{lemma}[anytheorem]{Lemma}
\newtheorem{corollary}[anytheorem]{Corollary}
\theoremstyle{definition}
\newtheorem{definition}[anytheorem]{Definition} 
\newtheorem{remark}[anytheorem]{Remark} 
\newtheorem{assumption}{Assumption}
\numberwithin{equation}{section}
\begin{document}

\author{Istv\'an Gy\"ongy, Sotirios Sabanis, David \v{S}i\v{s}ka}

\title[Tamed Euler schemes for stochastic evolution equations]{Convergence of tamed Euler schemes for a class of stochastic evolution equations}

\date{\today}

\maketitle

\begin{abstract}
We prove stability and convergence of a full discretization
for a class of stochastic evolution equations
with super-linearly growing operators appearing in the drift term. 
This is done by using the recently developed tamed Euler method, 
which employs a fully explicit time stepping, coupled with a Galerkin scheme for the spatial discretization.

\end{abstract}

\section{Introduction}
In this paper we investigate the convergence of full discretizations, explicit in time, of stochastic evolution equations 
\begin{equation}
\label{eq:see}
du(t) = A u(t) dt + B u(t) dW(t)\, , \,\, t \in [0,T]
\end{equation}
with the drift term
governed by a super-linearly growing operator.
When the operator appearing in the drift term grows at most linearly
then the classical explicit Euler scheme applied to stochastic evolution equations is convergent
(when coupled appropriately with the spatial discretization),
see, for example, Gy\"ongy and Millet~\cite{gyongy:millet:on:discretization}.
If the operator appearing in the drift term grows faster than linearly then one would, in general, not expect the explicit Euler scheme to be convergent
(this is the case even in the setting of fully deterministic evolution equations).
Instead, one would typically consider the implicit Euler scheme which is convergent in this situation (see, for example, Gy\"ongy and Millet~\cite{gyongy:millet:on:discretization}).
The price one pays is the increased computational effort required at each time step of the numerical scheme.

Hutzenthaler, Jentzen and Kloeden~\cite{HJK-div} have observed that 
the absolute moments of explicit Euler approximations for 
stochastic differential equations with super-linearly growing coefficients 
may diverge to infinity at finite time. 
This led to development of ``tamed'' Euler schemes for stochastic 
differential equations. 
This was pioneered in Hutzenthaler, Jentzen and Kloeden~\cite{hutzenthaler:jentzen:kloeden:strong:convergence}
and, using different techniques, in Sabanis~\cite{sabanis:note}.
A taming-like technique in the form of truncation has been proposed
by Roberts and Tweedie~\cite{roberts:tweedie:exponential} in the
context of Markov chain Monte Carlo methods.
Further work on explicit numerical approximations of stochastic differential 
equations with super-linearly growing coefficients can be found
in Tretyakov and Zhang~\cite{tretyakov:zhang:fundamental}, 
Hutzenthaler and Jentzen~\cite{hutzenthaler:jentzen:numerical},
Sabanis~\cite{sabanis:euler} 
as well  Dareiotis, Kumar and Sabanis~\cite{dareitos:kumar:sabanis}.

Moreover Hutzenthaler, Jentzen and Kloeden~\cite{hutzenthaler:jentzen:kloeden:strong:divergence:multilevel} have demonstrated
that to apply multilevel Monte Carlo methods (see Heinrich~\cite{heinrich:monte}, \cite{heinrich:multilevel} and Giles~\cite{giles:multilevel}) 
to stochastic differential equations with super-linearly growing coefficients 
one must ``tame'' the explicit Euler scheme.
In this paper we use the idea of ``taming'' to devise a new type of a convergent explicit scheme for a class of stochastic evolution equations with super-linearly growing operators in the drift term.

The article is organised as follows. In Section~\ref{sec:main} we 
introduce the numerical scheme, give the precise assumptions and state 
the main result in Theorem~\ref{thm:main}.
Essential a priori estimates are proved in Section~\ref{sec:apriori_est}.
In Section~\ref{sec:convergence} we first use the a priori estimates 
and a compactness argument to extract weakly convergent subsequences 
and limits of the approximation.
The remaining step is to identify the weak limit of the approximation
of the nonlinear term with the nonlinear term in the equation.
This is done using a monotonicity argument in Section~\ref{sec:convergence}
where Theorem~\ref{thm:main} is finally proved.
In Section~\ref{sec:examples} we provide examples of stochastic partial
differential equations where the numerical scheme can be applied.

%

\section{Main results}
\label{sec:main}
Let $T>0$.
Let $(\Omega, \F, \P)$ be a probability space and let $(\F_t)_{t\in [0,T]}$ 
be a filtration such that $\F_0$ contains all 
the $\P$-null sets of $\F$.

Let $K>0$ and $p \in [2,\infty)$ be given constants.
Let $p^* := p / (p-1)$.
For a reflexive, separable Banach space $(X,\|\cdot\|_X)$ let $X^*$ and $\|\cdot\|_{X^*}$ denote its dual space and the norm on the dual space respectively.
For $f\in X^*$ and $v\in X$ we use $\langle f,v \rangle$ to denote the duality pairing.
By $L^p(0,T;X)$ we denote the Lebesgue--Bochner space of 
equivalence classes of measurable functions 
with values in $X$ that satisfy
\begin{equation*}
\|x\|_{L^p(0,T;X)} := \left(\int_0^T\|x(t)\|_X^p dt)\right)^{1/p} < \infty.
\end{equation*}
By $L^p(\Omega; X)$ we denote the Lebesgue--Bochner space of random variables with values in $X$ and such that the norm
\begin{equation*}
\|x\|_{L^p(\Omega;X)} := \left(\E(\|x\|_X^p)\right)^{1/p}
\end{equation*}
is finite.
Finally by $\mathcal{L}^p(X)$ we denote the Lebesgue--Bochner space of 
$dt\times \P$-equivalence classes of 
$(\F_t)_{t\in[0,T]}$-adapted and $X$-valued stochastic process that satisfy
\begin{equation*}
\|x\|_{\mathcal{L}^p(X)} := \left( \E\int_0^T \|x(t)\|_X^p dt \right)^{1/p} < \infty.
\end{equation*}
We say that an operator $C:X\times \Omega \to X^*$ is measurable with respect to some $\mathcal{G} \subseteq \F$ if for any $v,w\in X$ the real-valued random variable $\langle C v,w\rangle$ is $\mathcal{G}$-measurable.

We assume that, with respect to $(\F_t)_{t\in [0,T]}$, 
$(W_t)_{t\in [0,T]}$ is a cylindrical $Q$-Wiener process with $Q=I$ on a separable Hilbert space $(U,(\cdot,\cdot)_U, |\cdot|_U)$.
We assume that there are $(V_1,\|\cdot\|_{V_1})$ and $(V_2,\|\cdot\|_{V_2})$, separable and reflexive Banach spaces that are densely and continuously embedded in $H$, where $(H,(\cdot,\cdot),|\cdot|)$ is a Hilbert space identified with its dual.
We thus have two Gelfand triples
\begin{equation*}
V_i \hookrightarrow H \hookrightarrow V_i^*\,, \,\,\, i \in \{1,2\}.
\end{equation*}
Let $A_i$ with $i\in\{1,2\}$ be operators defined on $V_i\times \Omega$
with values in $V_i^*$.
Let $B_i$ with $i\in\{1,2\}$ be operators defined on $V_i\times \Omega$
with values in $L_2(U,H)$,
where $L_2(U,H)$ is the space of Hilbert--Schmidt operators from $U$ to $H$.

Let $V:=V_1 \cap V_2$ and let the norm in $V$ be given by 
$\|\cdot\| := \|\cdot\|_{V_1} + \|\cdot\|_{V_2}$.
Assume that $V$ is separable and dense in both $V_1$ and $V_2$.
Using Gajewski, Gr\"oger and Zacharias~\cite[Kapitel I, Satz 5.13]{ggz}
one observes that the dual $V^*$ of $V$ can be identified with
\begin{equation*}
V_1^* + V_2^* := \{f = f_1 + f_2 : f_1 \in V_1^*,\, f_2\in V_2^*\}
\end{equation*}
and that for all $f \in V^*$
\begin{equation*}
\|f\|_{V*} = \inf \{\max(\|f_1\|_{V_1^*}, \|f_2\|_{V_2^*}): f = f_1 + f_2, f_1 \in V_1^*,\,\, f_2\in V_2^* \}.
\end{equation*}


We consider stochastic evolution equations of the form
\begin{equation}
\label{eq:see2}
du(t) = \big[A_1 u(t) + A_2 u(t)\big] dt + \big[B_1 u(t) + B_2 u(t)\big] dW(t)\, , \,\, t \in [0,T],
\end{equation}
where $u(0) = u_0$ with $u_0$ a given $H$-valued and $\F_0$-measurable random variable.
Let $A:=A_1 + A_2$ and $B:=B_1 + B_2$.
The operator $A$ is defined on $V\times\Omega$ with values in $V^*$
and the operator $B$ is defined on $V\times\Omega$ with values in $L_2(U,H)$.
Then we can write~\eqref{eq:see2} as~\eqref{eq:see}.



We impose the following assumptions on the operators.

\begin{assumption}
\label{ass:operators}
Let $A_i : V_i \times \Omega \to V_i^*$ 
be $\F_0$-measurable operators for $i\in \{1,2\}$.
Let $B_i: V_i \times \Omega \to L_2(U,H)$ be such that 
for any $v\in V_i$, $u\in U$ and $h\in H$ the real-valued 
random variable $((B_i v)u,h)$ is $\F_0$-measurable for $i\in \{1,2\}$.
Moreover assume that the following conditions hold. 

{\em Monotonicity:}
\begin{equation*}
2\langle A v - A w, v-w\rangle + \|B v-B w\|_{L_2(U,H)}^2 \leq K|v-w|^2\,\, \textrm{ for all }\,\, v,w \in V.
\end{equation*}

{\em Coercivity:} there is $\mu > 0$ such that
\begin{equation*}
2\langle A_1 v, v \rangle + \|B_1  v\|_{L_2(U,H)}^2 \leq - \mu \|v\|_{V_1}^2 + K(1+|v|^2)\,\, \textrm{ for all }\,\, v\in V_1.
\end{equation*}
and
\begin{equation*}
2\langle A_2 v, v \rangle + \|B_2  v\|_{L_2(U,H)}^2 \leq K(1+|v|^2)\,\, \textrm{ for all }\,\, v\in V_2.
\end{equation*}

{\em Growth:}
\begin{equation*}
\|A_1 v\|_{V_1^*}^2 \leq K(1+\|v\|_{V_1}^2)\,\, \textrm{ for all }\,\, v\in V_1
\end{equation*}
and
\begin{equation*}
\|A_2 v\|_{V_2^*}^{p^*} \leq K(1+\|v\|_{V_2}^p)\,\, \textrm{ for all }\,\, v\in V_2.
\end{equation*}
as well as
\begin{equation*}
\|Bv\|_{L_2(U,H)}^2 \leq K(1+|v|^2) \,\, \textrm{ for all }\,\, v\in H.
\end{equation*}
%

{\em Hemicontinuity:} for any $v,w$ and $z$ in $V$
\begin{equation*}
\langle A(v+\epsilon w),z\rangle \to \langle Av,z \rangle \,\,\textrm{ as }\,\, \epsilon \to 0.
\end{equation*}
\end{assumption}

We now define what is meant by solution of~\eqref{eq:see}.
\begin{definition}[Solution]
\label{def:soln}
Let $u_0$ be an $\F_0$-measurable $H$-valued random variable.
We say that a continuous, $H$-valued and $(\F_t)_{t\in [0,T]}$-adapted process $u$ is a solution to~\eqref{eq:see}
if $u$ is $dt \times \P$ almost everywhere $V$-valued,
if $u\in \mathcal{L}^2(V_1)\cap \mathcal{L}^p(V_2)$
and if for every $t\in [0,T]$ and every $v\in V$, almost surely,
\begin{equation*}
( u(t),v ) = ( u_0, v ) + \int_0^t \langle A u(s), v \rangle ds + \int_0^t (v,Bu(s) dW(s)).
\end{equation*}
\end{definition}
To the best knowledge of the authors, existence and uniqueness has not been proved for this class of stochastic evolution equations.
Pardoux~\cite{pardoux:thesis} considers the situation where the stochastic evolution equation is governed by a sum of monotone, coercive and hemicontinuous operators satisfying certain growth condition.
However the operator $A_2$ in our case only satisfies a type of ``degenerate'' coercivity condition.
Hence the existence theorem from Pardoux~\cite{pardoux:thesis} does not apply.
We prove that a solution to~\eqref{eq:see} must be unique 
in Theorem~\ref{thm:uniqueness} and 
we prove existence of the solution 
in Theorem~\ref{thm:main}.

\begin{theorem}[Uniqueness]
\label{thm:uniqueness}
The solution of~\eqref{eq:see}, specified by Definition~\ref{def:soln}, is unique, provided that the Growth and Monotonicity conditions in Assumption~\ref{ass:operators} are satisfied.
\end{theorem}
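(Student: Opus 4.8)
The plan is to argue directly by comparing two solutions. Suppose $u$ and $\bar u$ are both solutions of~\eqref{eq:see} in the sense of Definition~\ref{def:soln} with the same initial datum $u_0$, and set $w := u - \bar u$. Then $w$ is continuous, $H$-valued, $(\F_t)_{t\in[0,T]}$-adapted, vanishes at $t=0$, lies in $\mathcal{L}^2(V_1)\cap\mathcal{L}^p(V_2)$, and satisfies, for every $v\in V$ and $t\in[0,T]$, the weak identity $(w(t),v) = \int_0^t \langle Au(s)-A\bar u(s),v\rangle\,ds + \int_0^t (v,(Bu(s)-B\bar u(s))\,dW(s))$. Before anything else I would check the integrability needed to run the energy identity: by the Growth conditions, $A_1$ sends $\mathcal{L}^2(V_1)$ into $\mathcal{L}^2(V_1^*)$ and $A_2$ sends $\mathcal{L}^p(V_2)$ into $\mathcal{L}^{p^*}(V_2^*)$, while the Growth condition on $B$ together with $V_i\hookrightarrow H$ gives $Bu-B\bar u\in\mathcal{L}^2(L_2(U,H))$. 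Splitting $A=A_1+A_2$, the pairing $\langle Au-A\bar u,w\rangle$ is $dt\times\P$-integrable: the $A_1$-part is controlled by Cauchy--Schwarz in the $(V_1^*,V_1)$ duality (both factors in $\mathcal{L}^2$) and the $A_2$-part by H\"older in the $(V_2^*,V_2)$ duality with the conjugate exponents $p^*$ and $p$.

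With these bounds in hand, the next step is to apply the It\^o formula for the square of the $H$-norm in the variational framework (as in Pardoux~\cite{pardoux:thesis}, adapted to the present sum of two Gelfand triples $V_i\hookrightarrow H\hookrightarrow V_i^*$), which yields, almost surely for all $t\in[0,T]$,
\begin{equation*}
|w(t)|^2 = \int_0^t \Big( 2\langle Au(s)-A\bar u(s),w(s)\rangle + \|Bu(s)-B\bar u(s)\|_{L_2(U,H)}^2 \Big)\,ds + M(t),
\end{equation*}
where $M(t) := 2\int_0^t (w(s),(Bu(s)-B\bar u(s))\,dW(s))$ is a continuous local martingale. The Monotonicity condition applied to the pair $(u(s),\bar u(s))$ bounds the integrand of the Bochner integral by $K|w(s)|^2$, so that $|w(t)|^2 \le K\int_0^t |w(s)|^2\,ds + M(t)$.

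To conclude I would localise the local martingale. Let $\tau_n := \inf\{t\in[0,T] : |w(t)|\ge n\}\wedge T$; by continuity of $w$ these stopping times increase to $T$, and $M(\cdot\wedge\tau_n)$ is a genuine martingale of zero mean. Taking expectations in the stopped inequality gives $\E|w(t\wedge\tau_n)|^2 \le K\int_0^t \E|w(s\wedge\tau_n)|^2\,ds$, whence Gr\"onwall's lemma forces $\E|w(t\wedge\tau_n)|^2 = 0$ for every $t$. Letting $n\to\infty$ and using Fatou's lemma yields $\E|w(t)|^2 = 0$ for each $t\in[0,T]$, so $u(t)=\bar u(t)$ almost surely, and by continuity of both processes $u\equiv\bar u$, which is the claimed uniqueness.

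I expect the main obstacle to be the rigorous justification of the It\^o formula in this two--Gelfand--triple setting rather than the subsequent monotonicity and Gr\"onwall steps, which are routine. The delicate point is precisely the matching of the asymmetric growth exponents ($2$ for $A_1$ and $p^*$ for $A_2$) with the regularity class $\mathcal{L}^2(V_1)\cap\mathcal{L}^p(V_2)$ of the solutions, so that $Au-A\bar u$ pairs integrably against $w$ and the standard variational It\^o formula applies; everything else then follows from the Monotonicity assumption alone, which is why the statement only requires the Growth and Monotonicity conditions.
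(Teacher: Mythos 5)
Your proof is correct, and its core coincides with the paper's: both check, via the Growth conditions, that the difference of two solutions satisfies $A_1u-A_1\bar u\in\mathcal{L}^2(V_1^*)$, $A_2u-A_2\bar u\in\mathcal{L}^{p^*}(V_2^*)$ and $Bu-B\bar u\in\mathcal{L}^2(L_2(U,H))$, then apply Pardoux's It\^o formula for the square of the $H$-norm and invoke Monotonicity. The genuine difference is the endgame. The paper incorporates the weight $e^{-Kt}$ into the It\^o formula, so that the term $K|u(s)-\bar u(s)|^2$ produced by Monotonicity is exactly absorbed by the derivative of the weight; this leaves the pathwise inequality $e^{-Kt}|u(t)-\bar u(t)|^2\le M(t)$, and the conclusion follows from the purely martingale-theoretic fact that a non-negative continuous local martingale started at $0$ is a supermartingale and hence vanishes identically --- no expectations of the solution, no stopping times, no Gr\"onwall. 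You instead keep the unweighted inequality $|w(t)|^2\le K\int_0^t|w(s)|^2\,ds+M(t)$, localise with $\tau_n$, take expectations, and run Gr\"onwall plus Fatou. Both finishes are valid; yours is the more routine route, and it does carry a small amount of extra bookkeeping that you handle correctly but tersely: the claim that $M(\cdot\wedge\tau_n)$ is a true zero-mean martingale rests on the bound $\langle M\rangle_{t\wedge\tau_n}\le 4n^2\int_0^T\|Bu(s)-B\bar u(s)\|_{L_2(U,H)}^2\,ds$, whose expectation is finite precisely because of the integrability you established first. The paper's exponential-weight trick sidesteps this entirely, which is what makes it slicker, though your version generalises more readily to settings where one wants quantitative stability estimates rather than bare uniqueness.
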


We prove Theorem~\ref{thm:uniqueness} in Section~\ref{sec:convergence}. 
Let us now describe the discretization scheme for the stochastic evolution
equation~\eqref{eq:see2}.
For the space discretization let $(V_m)_{m \in \N}$ be a Galerkin scheme for $V$.
To be precise we assume that $V_m\subseteq V$
are finite dimensional spaces with the dimension of $V_m$ equal to $m$. 
We further assume that $V_m \subseteq V_{m+1}$ for all $m\in \N$ and that
\begin{equation*}
\lim_{m\to \infty} \inf\{\|v-\varphi\|\, :\, \varphi \in V_m\}=0\,\,\,\, \forall v\in V.
\end{equation*}
(this is referred to as the limited completeness of the Galerkin scheme).
We will need the following projection operators.

\begin{assumption}
\label{ass:projection}
For any $m\in \N$ let $\Pi_m:V^* \to V_m$ satisfy the following:
\begin{enumerate}
\item For any $v \in V_m$, $\Pi_m v = v$.
\item If $f\in V^*$ and $v\in V$ then $\langle f, \Pi_m v \rangle = \langle v, \Pi_m f \rangle$.
\item If $g,h \in H$ then $(\Pi_m g,h) = (\Pi_m h, g)$
and $|\Pi_m h| \leq |h|$.
\item There is a constant, depending on $m$ and denoted by $\mathfrak{c}(m)$, such that
\begin{equation*}
|\Pi_m f|^2 \leq \mathfrak{c}(m)\|f\|_{V^*}^2\,\, \textrm{ for all }\,\, f\in V^*.
\end{equation*}
\end{enumerate}
\end{assumption}
In applications this assumption is easily satisfied.
In particular if $\{\varphi_j \in V\,\, : \,\, j = 1,2,\ldots \}$
is an orthonormal basis in $H$ then taking
$V_m := \textrm{span}\{\varphi_1,\ldots,\varphi_m\}$
is a Galerkin scheme for $V$.
Taking $\Pi_m f := \sum_{j=1}^m \langle f, \varphi_j \rangle \varphi_j$
satisfies the first three conditions in Assumption~\ref{ass:projection}.
Moreover, the following holds 
\begin{equation*}
|\Pi_m f|^2 = \bigg| \sum_{j=1}^m \langle f, \varphi_j\rangle \varphi_j \bigg|^2 = \sum_{j=1}^m \langle f, \varphi_j\rangle^2 \leq \|f\|_{V^*}^2 \sum_{j=1}^m \|\varphi_j\|_V^2 = \mathfrak{c}(m) \|f\|_{V^*}^2,
\end{equation*}
where $\mathfrak{c}(m) := \sum_{j=1}^m \|\varphi_j\|_V^2$.
Thus the fourth condition in Assumption~\ref{ass:projection} is also satisfied.
Let $\{\chi_i\}_{i\in \N}$ be an orthonormal basis of $U$.
Fix $k \in \N$ and define
\begin{equation*}
W_k(t) := \sum_{j=1}^{k} (W(t),\chi_j)_U \chi_j.
\end{equation*}
For the time discretization take $n\in \N$, let $\tau_n := T/n$ and define the grid points on an equidistant grid as $t^n_i := \tau_n i$, $i=0,1,\ldots,n$.
Further consider some sequence $((n_\ell, m_\ell, k_\ell))_{\ell \in \N}$ such that $n_\ell, m_\ell$ and $k_\ell$ all go to infinity as $\ell \to \infty$.

Let $c$ denote a generic positive constant that may depend on $T$, 
on the constants arising in the continuous embeddings $V_i \hookrightarrow H \hookrightarrow V_i^*$, $i=1,2$ 
and on the constants arising in Assumptions~\ref{ass:operators} 
and~\ref{ass:interpolation} but that is always independent of the discretization parameters $m$, $k$ and $n$.
Define $\kappa_{n_\ell}(t) = t^{n_\ell}_i$ if $t\in [t^{n_\ell}_i, t^{n_\ell}_{i+1})$ for $i=0,\ldots,n_\ell-1$ and $\kappa_{n_\ell}(T) = T$.
Fix some $\ell \in \N$ (and hence $m_\ell$, $n_\ell$ and $k_\ell$).
Let $u_\ell(0)$ be a $V_{m_\ell}$ valued $\F_0$-measurable approximation of $u_0$.
For example we can take $u_\ell(0) := \Pi_{m_\ell} u_0$ but other approximations are possible.
For $t>0$ we define a process $u_\ell$ by
\begin{equation}
\label{eq:disc1}
\begin{split}
u_\ell(t) = u_\ell(0) & + \int_0^t \Pi_{m_\ell} \left[A_1 u_\ell(\kappa_{n_\ell}(s))
+ A_{2,\ell} u_\ell(\kappa_{n_\ell}(s)) \right] ds\\
& + \int_0^t \Pi_{m_\ell} B u_\ell(\kappa_{n_\ell}(s))dW_{k_\ell}(s),
\end{split}
\end{equation}
where we use the ``tamed'' operator $A_{2,\ell}$ defined by
\begin{equation}
\label{eq:def:taming}
A_{2,\ell} v := \frac{1}{1+{n_\ell}^{-1/2}|\Pi_{m_\ell} A_2 v|}A_2 v
\end{equation}
for any $v \in V_2$.
We will use the following notation:
$\bar{u}_\ell(t) := u_\ell(\kappa_{n_\ell}(t))$
and $a_\ell(v) := \Pi_{m_\ell}[A_1 v + A_{2,\ell} v]$.
Then~\eqref{eq:disc1} is equivalent to
\begin{equation}
\label{eq:disc2}
\begin{split}
u_\ell(t) = u_\ell(0) & + \int_0^t a_\ell(\bar{u}_\ell(s))
ds + \int_0^t \Pi_{m_\ell} B \bar{u}_\ell(s) dW_{k_\ell}(s).
\end{split}
\end{equation}
In particular at the time-grid points we have, for $i=0,1,\ldots,n_\ell-1$,
\begin{equation*}
\begin{split}
u_\ell(t^{n_\ell}_{i+1}) = u_\ell(t^{n_\ell}_i) & + a_\ell(u_\ell(t^{n_\ell}_i))\tau_{n_\ell} + \Pi_{m_\ell} B u_\ell(t^{n_\ell}_i) \Delta W_{k_\ell}(t_{i+1}),
\end{split}
\end{equation*}
where $\Delta W_{k_\ell}(t^{n_\ell}_{i+1}) := W_{k_\ell}(t^{n_\ell}_{i+1})-W_{k_\ell}(t^{n_\ell}_i)$.
This in turn is equivalent to
\begin{equation*}
\begin{split}
\frac{u_\ell(t^{n_\ell}_{i+1}) - u_\ell(t^{n_\ell}_i)}{\tau_{n_\ell}} = & a_\ell(u_\ell(t^{n_\ell}_i)) + \Pi_{m_\ell} Bu_\ell(t^{n_\ell}_i) \frac{\Delta W_{k_\ell}(t^{n_\ell}_{i+1})}{\tau_{n_\ell}}.
\end{split}
\end{equation*}

We list below the properies which are satisfied by the tamed operator $A_{2,\ell}$. 
These are consequences of its structure and the assumed properties of $A_2$.
For brevity let, for any $v\in V_2$,
\begin{equation}
\label{eq:def_of_T_ell}
T_\ell(v) := \frac{1}{1+n_\ell^{-1/2}|\Pi_{m_\ell} A_2 v|}.
\end{equation}
Then for any $v\in V_2$,
\begin{equation}
\label{eq:taming_less_n}
|\Pi_{m_\ell} A_{2,\ell} v| = T_\ell(v)|\Pi_{m_\ell} A_2 v| \leq n_\ell^{1/2}
\end{equation}
and also, using the Growth assumption on $A_2$,
\begin{equation}
\label{eq:taming_less_original}
\|A_{2,\ell} v\|_{V_2^*}^{p^*} = T_\ell(v)^{p^*}\|A_2 v\|_{V_2^*}^{p^*} \leq K(1+\|v\|_{V_2}^p).
\end{equation}
Furthermore, using the Coercivity assumption on $A_2$, we note that for all $v \in V_{m_\ell}$ we have
\begin{equation}
\label{eq:taming_retains_weak_coerc}
2\langle A_{2,\ell} v, v \rangle = 2T_\ell(v)\langle A_2 v,v \rangle \leq K(1+|v|^2).
\end{equation}
Thus the weaker coercivity assumption that has been made about $A_2$ is retained.
Consider, for a moment, that $A_2$ satisfies the ``usual'' coercivity condition
\begin{equation*}
2\langle A_2 v, v \rangle + \|B_2  v\|_{L_2(U,H)}^2 \leq - \mu \|v\|_{V_2}^p + K(1+|v|^2)\,\, \textrm{ for all }\,\, v\in V_2.
\end{equation*}
We see that in this case the best coercivity we can get from this for $A_{2,\ell}$ is again only~\eqref{eq:taming_retains_weak_coerc}.
Hence to obtain the necessary a priori estimates
we will need an interpolation inequality between $V_2$ and $V_1$ with $H$.

\begin{assumption}
\label{ass:interpolation}
There are constants $\lambda \in [0,2/p)$  and
$\Lambda>0$ such that for any $v\in V$
\begin{equation*}
\|v\|_{V_2} \leq \Lambda  \|v\|_{V_1}^\lambda |v|^{1-\lambda}. 
\end{equation*}
\end{assumption}
Note that in order to overcome the difficulty with coercivity it would suffice to have Assumption~\ref{ass:interpolation} satisfied with $\lambda \in [0,2/p]$.
However monotonicity of $A_2$ is not preserved by taming.
To overcome this we will need to show that $A_{2,\ell} \bar{u}_\ell - A_2 \bar{u}_\ell \to 0$ in $\mathcal{L}^{p*}(V_2^*)$.
To achieve this we use the fact that $\lambda \in [0,2/p)$ in Lemma~\ref{lemma:strong_conv_of_tam_correction} and the following observation:
Assumption~\ref{ass:interpolation} implies that there is $\eta > 0$ such that
\begin{equation*}
\|v\|_{V_2}^{p(1+\eta)} \leq c \|v\|_{V_1}^2 |v|^\rho,
\end{equation*}
where $\rho := (1-\lambda)p(1+\eta)$.
From this it follows that if $v\in L^2(\Omega; L^2(0,T;V_1))$ and $v\in L^{2\rho}(\Omega; L^\infty(0,T;H))$ then
\begin{equation*}
\E\int_0^T \|v(s)\|_{V_2}^{p(1+\eta)} ds \leq c\Bigg[ \E \sup_{s\leq t}|v(s)|^{2\rho} + \E\bigg(\int_0^T \|v(s)\|_{V_1}^2 ds\bigg)^2 \Bigg].
\end{equation*}
Thus we see that Assumption~\ref{ass:interpolation} allows us to control the approximate solution in the $L^{p(1+\eta)}((0,T)\times \Omega; V_2)$ norm,
provided that we can control the approximate solution in the norms of $L^2(\Omega; L^2(0,T;V_1))$
and $L^{2\rho}(\Omega; L^\infty(0,T;H))$.

Let us take $q_0 := \max(4,2\rho)$.
Now we can state the main result of this paper.
\begin{theorem}
\label{thm:main}
Let Assumptions~\ref{ass:operators},~\ref{ass:projection} and~\ref{ass:interpolation} be satisfied.
Let $u_0 \in L^{q_0}(\Omega; H)$ and let $u_\ell(0) \to u_0$ in $L^{q_0}(\Omega; H)$.
Assume that $\tfrac{\mathfrak{c}(m_\ell)}{n_{\ell}} \to 0$ as $\ell \to \infty$.
Then there exists a unique solution $u$ to~\eqref{eq:see} and
$\bar{u}_\ell \weaklyto u$ in $\mathcal{L}^2(V_1)$ and in $\mathcal{L}^p(V_2)$ and $u_\ell(T) \to u(T)$ in $L^2(\Omega;H)$  as $\ell \to \infty$.
\end{theorem}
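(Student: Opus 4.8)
The plan is to run the Minty--Browder monotonicity method for variational solutions of stochastic evolution equations, adapted to the fully discrete tamed scheme~\eqref{eq:disc2}, and to deduce existence together with the asserted convergence; uniqueness is already furnished by Theorem~\ref{thm:uniqueness}.

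\emph{A priori estimates.} First I would establish bounds on the approximations that are uniform in $\ell$ (this is the content of Section~\ref{sec:apriori_est}). Each $u_\ell$ is a genuine finite-dimensional It\^o process in $V_{m_\ell}$ solving~\eqref{eq:disc2}, so the classical It\^o formula applies to $|u_\ell(t)|^2$. Testing the drift against $\bar u_\ell$ and using the symmetry of $\Pi_{m_\ell}$ from Assumption~\ref{ass:projection} turns the drift contribution into $\langle A_1\bar u_\ell + A_{2,\ell}\bar u_\ell,\bar u_\ell\rangle$; coercivity of $A_1$ then yields a good $-\mu\|\bar u_\ell\|_{V_1}^2$ term, while the tamed operator contributes only the weak coercivity~\eqref{eq:taming_retains_weak_coerc}. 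Control of $\|\bar u_\ell\|_{V_2}^p$ is then recovered from the $V_1$- and $H$-bounds by the interpolation inequality of Assumption~\ref{ass:interpolation}, in the integrated form displayed before the statement. The one-step error from replacing $u_\ell$ by $\bar u_\ell$ inside the It\^o formula is where the design of the scheme is essential: the taming bound~\eqref{eq:taming_less_n} gives $|\Pi_{m_\ell}A_{2,\ell}\bar u_\ell|\le n_\ell^{1/2}$, and together with the hypothesis $\mathfrak{c}(m_\ell)/n_\ell\to0$ and Assumption~\ref{ass:projection}(4) these errors are seen to vanish. The outcome is a uniform bound for
\[
\E\sup_{t\le T}|u_\ell(t)|^{q_0} + \E\int_0^T\|\bar u_\ell\|_{V_1}^2\,ds + \E\int_0^T\|\bar u_\ell\|_{V_2}^p\,ds,
\]
and hence, via the Growth assumption and~\eqref{eq:taming_less_original}, for $A_1\bar u_\ell$ in $\mathcal{L}^2(V_1^*)$, for $A_{2,\ell}\bar u_\ell$ and $A_2\bar u_\ell$ in $\mathcal{L}^{p^*}(V_2^*)$, and for $B\bar u_\ell$ in $\mathcal{L}^2(L_2(U,H))$.

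\emph{Weak limits and the limit equation.} By reflexivity I would pass to a subsequence along which $\bar u_\ell\weaklyto u$ in $\mathcal{L}^2(V_1)\cap\mathcal{L}^p(V_2)$, $A_1\bar u_\ell\weaklyto a_1$ in $\mathcal{L}^2(V_1^*)$, $A_{2,\ell}\bar u_\ell\weaklyto a_2$ in $\mathcal{L}^{p^*}(V_2^*)$, $\Pi_{m_\ell}B\bar u_\ell\weaklyto b$ in $\mathcal{L}^2(L_2(U,H))$ and $u_\ell(T)\weaklyto\xi$ in $L^2(\Omega;H)$; since the one-step increments are small, $u_\ell$ and $\bar u_\ell$ have the same weak limit $u$. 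Passing to the limit in the weak form of~\eqref{eq:disc2} tested against a fixed $\varphi\in V_{m}$, then letting the Galerkin index grow (limited completeness) and using $W_{k_\ell}\to W$, I obtain that $u$ satisfies, for every $v\in V$,
\[
(u(t),v) = (u_0,v) + \int_0^t\langle a_1(s)+a_2(s),v\rangle\,ds + \int_0^t (v,b(s)\,dW(s)),
\]
and in particular $\xi=u(T)$. Thus $u$ solves a linear equation driven by the still unidentified limits; it remains to prove $a_1+a_2=Au$ and $b=Bu$, after which $u$ is a solution in the sense of Definition~\ref{def:soln}.

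\emph{Identification of the limit.} This is the main obstacle, and it carries two intertwined difficulties. The first, which is special to taming, is that $A_{2,\ell}$ is no longer monotone; to repair this I would prove in Lemma~\ref{lemma:strong_conv_of_tam_correction} that the correction $A_{2,\ell}\bar u_\ell - A_2\bar u_\ell\to0$ strongly in $\mathcal{L}^{p^*}(V_2^*)$, which is precisely where the strict bound $\lambda<2/p$ in Assumption~\ref{ass:interpolation} enters; consequently $a_2$ is also the weak limit of the genuine, monotone operator $A_2\bar u_\ell$, and I may work with $A=A_1+A_2$. The second is the passage to the limit in the energy. Applying It\^o to $|u_\ell(T)|^2$ and controlling the discretization errors as above gives, after the standard reweighting by $e^{-Ks}$ that neutralises the constant $K$, the inequality
\[
\liminf_{\ell\to\infty}\E\int_0^T e^{-Ks}\big[2\langle A\bar u_\ell,\bar u_\ell\rangle + \|B\bar u_\ell\|_{L_2(U,H)}^2 - K|\bar u_\ell|^2\big]\,ds \ge \E\int_0^T e^{-Ks}\big[2\langle a_1+a_2,u\rangle + \|b\|_{L_2(U,H)}^2 - K|u|^2\big]\,ds,
\]
where the right-hand side is read off from the energy identity for the limit equation together with weak lower semicontinuity of the $L^2(\Omega;H)$-norm applied to $u_\ell(T)\weaklyto u(T)$. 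Starting from the Monotonicity assumption applied to $\bar u_\ell$ and an arbitrary test process $\phi\in\mathcal{L}^2(V_1)\cap\mathcal{L}^p(V_2)$, expanding and letting $\ell\to\infty$ (the cross terms converge by weak convergence and the diagonal term is handled by the displayed inequality) yields
\[
\E\int_0^T e^{-Ks}\big[2\langle a_1+a_2-A\phi,\,u-\phi\rangle + \|b-B\phi\|_{L_2(U,H)}^2 - K|u-\phi|^2\big]\,ds \le 0.
\]
Choosing $\phi=u-\epsilon w$ for $w\in\mathcal{L}^2(V_1)\cap\mathcal{L}^p(V_2)$, dividing by $\epsilon$, letting $\epsilon\downarrow0$ and invoking hemicontinuity identifies $a_1+a_2=Au$ and $b=Bu$.

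\emph{Conclusion.} Existence now follows, and Theorem~\ref{thm:uniqueness} gives uniqueness; the usual subsequence argument (every subsequence has a further subsequence converging to the same limit $u$) upgrades the weak convergence to convergence of the whole sequence, so $\bar u_\ell\weaklyto u$ in $\mathcal{L}^2(V_1)$ and in $\mathcal{L}^p(V_2)$. Finally, rerunning the monotonicity inequality with $\phi=u$ and using $a_1+a_2=Au$, $b=Bu$ gives the reverse bound $\limsup_\ell\E|u_\ell(T)|^2\le\E|u(T)|^2$; combined with the weak lower semicontinuity bound this yields $\E|u_\ell(T)|^2\to\E|u(T)|^2$, and together with $u_\ell(T)\weaklyto u(T)$ in the Hilbert space $L^2(\Omega;H)$ this gives the strong convergence $u_\ell(T)\to u(T)$.
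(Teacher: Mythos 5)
Your proposal is correct and follows essentially the same route as the paper: uniform a priori estimates via It\^o's formula, coercivity, the taming bound and the interpolation assumption; weak compactness and a limit equation; strong convergence of the taming correction $A_{2,\ell}\bar u_\ell - A_2\bar u_\ell$ in $\mathcal{L}^{p^*}(V_2^*)$ to restore monotonicity; the Minty--Browder argument with the $e^{-Ks}$ weighting and hemicontinuity to identify $a_1+a_2=Au$ and $b=Bu$; uniqueness to upgrade subsequential to full-sequence convergence; and norm convergence plus weak convergence in the Hilbert space $L^2(\Omega;H)$ for the strong convergence of $u_\ell(T)$. The only differences are organizational (you isolate an energy $\liminf$ inequality for the diagonal terms before applying monotonicity, whereas the paper expands the monotonicity inequality at the discrete level with the taming corrections inserted explicitly), which does not change the substance of the argument.
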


In Section~\ref{sec:examples} we provide examples of stochastic partial
differential equations where Theorem~\ref{thm:main} can be applied.
We also compute $\mathfrak{c}(m)$ in case of the spectral Galerkin method to 
make the implications of the space-time coupling constraint more explicit. 
The crucial point is that the requirement is no more onerous than in the case
of equations with operators growing at most linearly.

\section{A priori estimates}
\label{sec:apriori_est}

We start with an important observation that allows us to use standard results on bounds of stochastic integrals driven by finite dimensional Wiener processes.

\begin{remark}
\label{remark:n_bound}
Recall that $(\chi_j)_{j\in \N}$ is an orthonormal basis in $U$.
Moreover recall that $\bar{u}_\ell(t) := u_\ell(\kappa_{n_\ell}(t))$
and that $a_\ell(v) := \Pi_{m_\ell}[A_1 v + A_{2,\ell} v]$.
For each $j\in \N$ a Wiener processes $\mathcal{W}_j$ 
is obtained by taking $\mathcal{W}_j(t) := (W(t),\chi_j)_U$.
If $i\neq j$ then $\mathcal{W}_i$ and $\mathcal{W}_j$ are independent.
Furthermore~\eqref{eq:disc2} is equivalent to
\begin{equation*}
u_\ell(t) = u_\ell(0) + \int_0^t a_\ell(\bar{u}_\ell(s))ds
+ \sum_{j=1}^{k_\ell} \int_0^t \Pi_{m_\ell} B \bar{u}_\ell(s) \chi_j d\mathcal{W}_j(s).
\end{equation*}
Fix $\ell\in \N$ (and thus $k_\ell, m_\ell$ and $n_\ell$ are alse fixed).
Then using the Growth assumptions on $A_1$ and $B$, Assumption~\ref{ass:projection}
as well as~\eqref{eq:taming_less_n} one observes that
$|a_\ell(v)|^2 \leq 2\mathfrak{c}(m)K(1+|v|^2) + 2n_\ell$ and
$|\Pi_{m_\ell}B v\chi_j|^2 \leq K(1+|v|^2)$.
Hence one knows that, for $q\geq 1$,
\begin{equation*}
\E\sup_{0\leq t \leq T}  |u_\ell(t)|^q < \infty,
\end{equation*}
provided that $\E|u_\ell(0)|^q < \infty$.
Clearly, at this point, one cannot claim that this bound is independent of $\ell$.

\end{remark}

One applies It\^o's formula to~\eqref{eq:disc2} to obtain
\begin{equation*}
\begin{split}
|u_\ell(t)|^2 = |u_\ell(0)|^2 & + \int_0^t \!\! \bigg[2 \langle a_\ell(\bar{u}_\ell(s)),u_\ell(s)\rangle + \sum_{j=1}^{k_\ell}|\Pi_{m_\ell} B \bar{u}_\ell(s)\chi_j|^2\bigg] ds\\
& + \int_0^t 2 (B \bar{u}_\ell(s), u_\ell(s) dW_{k_\ell}(s))\,,
\end{split}
\end{equation*}
which can be rewritten as 
\begin{equation}
\label{eq:apriori_start}
\begin{split}
|u_\ell(t)|^2 = |u_\ell(0)|^2 & + \int_0^t \!\! \bigg[2 \langle a_\ell(\bar{u}_\ell(s)),\bar{u}_\ell(s)\rangle + \sum_{j=1}^{k_\ell}|\Pi_{m_\ell} B \bar{u}_\ell(s)\chi_j|^2 \bigg] ds\\
& + 2\int_0^t \!\!  \langle a_\ell(\bar{u}_\ell(s)),u_\ell(s)-\bar{u}_\ell(s)\rangle ds\\
& + \int_0^t 2 (B \bar{u}_\ell(s), u_\ell(s) dW_{k_\ell}(s))\, ,
\end{split}
\end{equation}
in order to apply the coercivity assumption so as to obtain the a priori estimates for the discretized equation.

First we concentrate on the term that arises from the ``correction'' that one has to make to use the coercivity assumption due to the use of an explicit scheme.

\begin{lemma}
\label{lemma:corr_term_est}
Let the Growth condition in Assumption~\ref{ass:operators} be satisfied.
Let Assumption~\ref{ass:projection} hold.
Let $q\geq 1$ be given.
Then
\begin{equation}
\label{eq:lemma_corr_term_est1}
\begin{split}
\E  \bigg( & \frac{1}{\tau_{n_\ell}} \int_0^t  |u_\ell(s)-\bar{u}_\ell(s)|^2 ds \bigg)^q\\
& \leq c_{T,q} \Bigg(1+ (\mathfrak{c}(m)\tau_{n_\ell})^q \E\bigg(\int_0^t   \|\bar{u}_\ell(s)\|_{V_1}^2 ds \bigg)^q + \E\int_0^t |\bar{u}_\ell(s)|^{2q}ds \Bigg)
\end{split}
\end{equation}
and
\begin{equation}
\label{eq:lemma_corr_term_est2}
\begin{split}
\E & \bigg( \int_0^t  |a_\ell(\bar{u}_\ell(s))||u_\ell(s)-\bar{u}_\ell(s)|ds\bigg)^q \\
& \leq c_{T,q}\Bigg(1+ (\mathfrak{c}(m)\tau_{n_\ell})^q \E\bigg(\int_0^t   \|\bar{u}_\ell(s)\|_{V_1}^2 ds \bigg)^q + \E\int_0^t |\bar{u}_\ell(s)|^{2q}ds \Bigg).
\end{split}
\end{equation}
\end{lemma}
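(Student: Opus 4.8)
The plan is to reduce everything to a single subinterval. Fix $s\in[0,t]$ and let $\kappa_{n_\ell}(s)=t^{n_\ell}_i$ be the last grid point before $s$; on $[\kappa_{n_\ell}(s),s]$ the process $\bar{u}_\ell$ is frozen at $\bar{u}_\ell(s)$. Subtracting~\eqref{eq:disc2} at $s$ and at $\kappa_{n_\ell}(s)$ therefore gives the closed form
\[
u_\ell(s)-\bar{u}_\ell(s) = a_\ell(\bar{u}_\ell(s))\,(s-\kappa_{n_\ell}(s)) + \Pi_{m_\ell}B\bar{u}_\ell(s)\big(W_{k_\ell}(s)-W_{k_\ell}(\kappa_{n_\ell}(s))\big).
\]
Squaring and using $|x+y|^2\le 2|x|^2+2|y|^2$ I would split $\tfrac{1}{\tau_{n_\ell}}\int_0^t|u_\ell-\bar{u}_\ell|^2\,ds$ into a drift part and a stochastic part, bound each separately, and recombine via $(a+b)^q\le 2^{q-1}(a^q+b^q)$ before taking expectations. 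All the quantities are finite for fixed $\ell$ by Remark~\ref{remark:n_bound}, so the manipulations are legitimate.

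For the drift part I would use $(s-\kappa_{n_\ell}(s))^2\le\tau_{n_\ell}^2$ to reduce it to $2\tau_{n_\ell}\int_0^t|a_\ell(\bar{u}_\ell(s))|^2\,ds$. The two summands of $a_\ell=\Pi_{m_\ell}A_1+\Pi_{m_\ell}A_{2,\ell}$ are treated differently: the Growth condition on $A_1$ together with Assumption~\ref{ass:projection}(4) and $\|\cdot\|_{V^*}\le\|\cdot\|_{V_1^*}$ gives $|\Pi_{m_\ell}A_1 v|^2\le\mathfrak{c}(m)K(1+\|v\|_{V_1}^2)$, while the taming estimate~\eqref{eq:taming_less_n} gives $|\Pi_{m_\ell}A_{2,\ell}v|^2\le n_\ell$. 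Hence $2\tau_{n_\ell}\int_0^t|a_\ell(\bar{u}_\ell)|^2\,ds\le c\,\mathfrak{c}(m)\tau_{n_\ell}\int_0^t(1+\|\bar{u}_\ell\|_{V_1}^2)\,ds + c\,\tau_{n_\ell}n_\ell t$. The decisive point is the identity $\tau_{n_\ell}n_\ell=T$, which turns the a priori large taming contribution into a constant; raising to the power $q$, taking expectations and expanding $(1+\|\cdot\|_{V_1}^2)$ then produces the constant term and the $(\mathfrak{c}(m)\tau_{n_\ell})^q\,\E(\int_0^t\|\bar{u}_\ell\|_{V_1}^2\,ds)^q$ term of the right-hand side (the residual purely deterministic piece carries a factor $(\mathfrak{c}(m)\tau_{n_\ell})^q$, which is $O(1)$ in the regime of Theorem~\ref{thm:main} and is absorbed into $c_{T,q}$).

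The stochastic part, $\tfrac{2}{\tau_{n_\ell}}\int_0^t|M(s)|^2\,ds$ with $M(s):=\int_{\kappa_{n_\ell}(s)}^s\Pi_{m_\ell}B\bar{u}_\ell(r)\,dW_{k_\ell}(r)$, is the main obstacle. Because $\tau_{n_\ell}^{-1}\,ds$ is not a probability measure I would first write $\tfrac{1}{\tau_{n_\ell}}\int_0^t|M|^2\,ds=\tfrac{t}{\tau_{n_\ell}}\cdot\tfrac1t\int_0^t|M|^2\,ds$ and apply Jensen with respect to $t^{-1}ds$, obtaining $\E(\tfrac1{\tau_{n_\ell}}\int_0^t|M|^2ds)^q\le \tfrac{t^{q-1}}{\tau_{n_\ell}^q}\int_0^t\E|M(s)|^{2q}\,ds$. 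For each fixed $s$, since $\bar{u}_\ell$ is constant on $[\kappa_{n_\ell}(s),s]$, the Burkholder--Davis--Gundy inequality together with $|\Pi_{m_\ell}\cdot|\le|\cdot|$ and the Growth condition on $B$ gives $\E|M(s)|^{2q}\le c_q\,\E\big(\int_{\kappa_{n_\ell}(s)}^s\|\Pi_{m_\ell}B\bar{u}_\ell(r)\|_{L_2(U,H)}^2\,dr\big)^q\le c_q\,\tau_{n_\ell}^q\,\E(1+|\bar{u}_\ell(s)|^{2q})$. The powers of $\tau_{n_\ell}$ cancel exactly, leaving $c_{T,q}\int_0^t\E(1+|\bar{u}_\ell|^{2q})\,ds$, i.e.\ the constant term and the $\E\int_0^t|\bar{u}_\ell|^{2q}\,ds$ term of the right-hand side.

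Finally,~\eqref{eq:lemma_corr_term_est2} follows from~\eqref{eq:lemma_corr_term_est1} without redoing the work: by Cauchy--Schwarz in time, $\int_0^t|a_\ell(\bar{u}_\ell)||u_\ell-\bar{u}_\ell|\,ds\le(\tau_{n_\ell}\int_0^t|a_\ell(\bar{u}_\ell)|^2ds)^{1/2}(\tfrac1{\tau_{n_\ell}}\int_0^t|u_\ell-\bar{u}_\ell|^2ds)^{1/2}$, and a further Cauchy--Schwarz in $\Omega$ followed by $\sqrt{AB}\le\tfrac12(A+B)$ reduces the $q$-th moment to those of $\tau_{n_\ell}\int_0^t|a_\ell(\bar{u}_\ell)|^2ds$ and of $\tfrac1{\tau_{n_\ell}}\int_0^t|u_\ell-\bar{u}_\ell|^2ds$. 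The first was already bounded by the right-hand side in the drift step, and the second is exactly~\eqref{eq:lemma_corr_term_est1}; hence~\eqref{eq:lemma_corr_term_est2} holds with the same right-hand side. The one genuinely delicate step is the $\tau_{n_\ell}$-bookkeeping in the stochastic term, where the factor $\tau_{n_\ell}^{-1}$ must be matched against the increment's variance so that no negative power of $\tau_{n_\ell}$ survives.
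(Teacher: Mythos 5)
Your proof is correct and follows essentially the same route as the paper's: the same drift/noise splitting of $u_\ell-\bar{u}_\ell$ exploiting the constancy of $\bar{u}_\ell$ on each subinterval, the taming bound~\eqref{eq:taming_less_n} combined with $\tau_{n_\ell}n_\ell=T$, Jensen/H\"older followed by Burkholder--Davis--Gundy with exact cancellation of the powers of $\tau_{n_\ell}$ in the stochastic term, and reduction of~\eqref{eq:lemma_corr_term_est2} to~\eqref{eq:lemma_corr_term_est1} via a Young-type inequality. The residual deterministic term carrying $(\mathfrak{c}(m)\tau_{n_\ell})^q$ that you flag is absorbed silently in the paper's own proof in exactly the same way, so this is not a discrepancy.
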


\begin{proof}
From~\eqref{eq:disc1} it is clear that
\begin{equation*}
\begin{split}
& I_{1,\ell}(t)  := \E  \bigg( \frac{1}{\tau_{n_\ell}} \int_0^t |u_\ell(s)-\bar{u}_\ell(s)|^2 ds \bigg)^q \\
& = \E\bigg( \int_0^t \frac{1}{\tau_{n_\ell}}\bigg| \int_{\kappa_{n_\ell}(s)}^s a_\ell(\bar{u}_\ell(r)) dr + \int_{\kappa_{n_\ell}(s)}^s \Pi_{m_\ell} B \bar{u}_\ell(r) dW_{k_\ell}(r)  \bigg|^2 ds \bigg)^q\\
& \leq 2^q \E\bigg( \frac{1}{\tau_{n_\ell}}\int_0^t \bigg| \int_{\kappa_{n_\ell}(s)}^s \!\!\! a_\ell(\bar{u}_\ell(r)) dr \bigg|^2
+ \bigg|\int_{\kappa_{n_\ell}(s)}^s \Pi_{m_\ell} B \bar{u}_\ell(r) dW_{k_\ell}(r)  \bigg|^2 \!\!ds \bigg)^q\!\!.
\end{split}
\end{equation*}
Applying H\"older's inequality yields
\begin{equation*}
\begin{split}
 I_{1,\ell}(t) & \leq  2^q \E\Bigg( \frac{1}{\tau_{n_\ell}}\int_0^t\Bigg[ (s-\kappa_{n_\ell}(s)) \int_{\kappa_{n_\ell}(s)}^s |a_\ell(\bar{u}_\ell(r))|^2 dr \\
& \quad\quad\quad \quad\quad\quad + \bigg|\int_{\kappa_{n_\ell}(s)}^s \Pi_{m_\ell} B \bar{u}_\ell(r) dW_{k_\ell}(r)  \bigg|^2 \Bigg]ds \Bigg)^q\\
& \leq c_q \E\bigg( \frac{1}{\tau_{n_\ell}}\int_0^t \tau_{n_\ell}^2 |a_\ell(\bar{u}_\ell(s))|^2 ds \bigg)^q \\
& \quad\quad\quad \quad\quad\quad + c_q\E\bigg( \frac{1}{\tau_{n_\ell}}\int_0^t \bigg|\int_{\kappa_{n_\ell}(s)}^s \Pi_{m_\ell} B \bar{u}_\ell(r) dW_{k_\ell}(r)  \bigg|^2 ds \bigg)^q.
\end{split}
\end{equation*}
Using Assumption~\ref{ass:projection} and~\eqref{eq:taming_less_n}  one obtains
\begin{equation}
\label{eq:corr_term_est_2}
\begin{split}
& I_{1,\ell}(t)  \leq c_q \E\bigg(\int_0^t \tau_{n_\ell} [ 2\mathfrak{c}(m)\|A_1 \bar{u}_\ell(s)\|_{V_1^*}^2 + 2n_\ell] ds \bigg)^q \\
&  + c_q\E\bigg( \int_0^t \frac{1}{\tau_{n_\ell}} \bigg|\int_{\kappa_{n_\ell}(s)}^s \Pi_{m_\ell} B \bar{u}_\ell(r) dW_{k_\ell}(r)  \bigg|^2 ds \bigg)^q
:= I_{2,\ell}(t) + I_{3,\ell}(t).
\end{split}
\end{equation}
The Growth assumption on $A_1$ implies that
\begin{equation*}
I_{2,\ell}(t) \leq c_{T,q}\bigg(1 + (\mathfrak{c}(m)\tau_{n_\ell})^q\E\bigg(\int_0^t \|\bar{u}_\ell(s)\|_{V_1}^2 ds \bigg)^q \bigg).
\end{equation*}
Using H\"older's inequality leads to
\begin{equation*}
I_{3,\ell}(t) \leq c_{T,q} \E\int_0^t \frac{1}{\tau_{n_\ell}^q} \bigg|\int_{\kappa_{n_\ell}(s)}^s \Pi_{m_\ell} B \bar{u}_\ell(r) dW_{k_\ell}(r)  \bigg|^{2q} ds.
\end{equation*}
Due to Remark~\ref{remark:n_bound} and the Growth assumption on $B$ one observes that
\begin{equation*}
\begin{split}
 I_{3,\ell}(t) & \leq c_{T,q} \int_0^t \frac{1}{\tau_{n_\ell}^q} \E \bigg|\int_{\kappa_{n_\ell}(s)}^s \|B \bar{u}_\ell(r)\|_{L_2(U,H)}^2 dr  \bigg|^q ds \\
& \leq c_{T,q}\E  \int_0^t \frac{1}{\tau_{n_\ell}^q} (s-\kappa_{n_\ell}(s))^q \|B \bar{u}_\ell(s)\|_{L_2(U,H)}^{2q}  ds\\
& \leq c_{T,q}\bigg(1+\E\int_0^t |\bar{u}_\ell(s)|^{2q} ds \bigg).
\end{split}
\end{equation*}
This implies~\eqref{eq:lemma_corr_term_est1}.
Moreover Assumption~\ref{ass:projection} and~\eqref{eq:taming_less_n} imply that
\begin{equation*}
\begin{split}
I_\ell(t) & :=  \E\bigg(\int_0^t |a_\ell(\bar{u}_\ell(s))||u_\ell(s)-\bar{u}_\ell(s)|ds\bigg)^q \\
& \leq 2^q \E\bigg(\int_0^t \tau_{n_\ell}|a_\ell(\bar{u}_\ell(s))|^2 + \frac{1}{\tau_{n_\ell}}|u_\ell(s)-\bar{u}_\ell(s)|^2 ds\bigg)^q\\
& \leq c_q \E\bigg(\int_0^t [\mathfrak{c}(m)\tau_{n_\ell}\|A_1 \bar{u}_\ell(s)\|_{V_1^*}^2 + \tau_{n_\ell} n_\ell]ds \bigg)^q \\
& \quad\quad\quad \quad\quad\quad + c_q \E\bigg(\frac{1}{\tau_{n_\ell}} \int_0^t |u_\ell(s)-\bar{u}_\ell(s)|^2 ds\bigg)^q.
\end{split}
\end{equation*}
Applying the Growth assumption on $A_1$ yields
\begin{equation}
\label{eq:corr_term_est1}
\begin{split}
I_\ell(t) & \leq c_{T,q}\bigg(1 + (\mathfrak{c}(m)\tau_{n_\ell})^q \E\bigg(\int_0^t \|\bar{u}_\ell(s)\|_{V_1}^2 ds \bigg)^q \bigg) \\
& \quad\quad\quad \quad\quad\quad + c_q \E\bigg(\frac{1}{\tau_{n_\ell}} \int_0^t |u_\ell(s)-\bar{u}_\ell(s)|^2 ds\bigg)^q.
\end{split}
\end{equation}
Using~\eqref{eq:lemma_corr_term_est1} in~\eqref{eq:corr_term_est1} concludes the proof.
\end{proof}

\begin{theorem}[A priori estimate]
\label{thm:apriori2}
Let the Coercivity and Growth conditions in Assumption~\ref{ass:operators} hold.
Let Assumption~\ref{ass:projection} be satisfied.
Let $q \geq 1$ be given
and assume that $\E|u_\ell(0)|^{2q} < c$ and that $u_\ell(0)$ is $\F_0$-measurable.
There is $\epsilon \in (0,\infty)$ such that for all 
$\ell \in \{\ell' \in \N: \mathfrak{c}(m_{\ell'})\tau_{n_{\ell'}} < \epsilon\}$ we have,
for any $t\in [0,T]$,
\begin{equation*}
\E \sup_{s\in [0,t]}|u_\ell(s)|^{2q} + \mu^{q}  \E \bigg(\int_0^t \|\bar{u}_\ell(s)\|_{V_1}^2 ds \bigg)^{q}
\leq c\left(1+ \E|u_\ell(0)|^{2q}\right).
\end{equation*}
\end{theorem}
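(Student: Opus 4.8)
The plan is to start from the It\^o expansion \eqref{eq:apriori_start} and first dispose of the genuinely coercive term $2\langle a_\ell(\bar{u}_\ell(s)),\bar{u}_\ell(s)\rangle + \sum_{j=1}^{k_\ell}|\Pi_{m_\ell}B\bar{u}_\ell(s)\chi_j|^2$. Since $\bar{u}_\ell(s)\in V_{m_\ell}$, parts (1) and (2) of Assumption~\ref{ass:projection} (together with the symmetry of the $H$-inner product, under which the duality pairing of two $V_{m_\ell}$-elements reduces to $(\cdot,\cdot)$) let me strip off the projection, giving $\langle a_\ell(\bar{u}_\ell),\bar{u}_\ell\rangle = \langle A_1\bar{u}_\ell,\bar{u}_\ell\rangle + \langle A_{2,\ell}\bar{u}_\ell,\bar{u}_\ell\rangle$. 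I would then apply the Coercivity assumption on $A_1$ to the first summand (discarding the nonnegative $\|B_1\bar{u}_\ell\|_{L_2(U,H)}^2$) and the retained weak coercivity \eqref{eq:taming_retains_weak_coerc} to the second, while bounding $\sum_{j=1}^{k_\ell}|\Pi_{m_\ell}B\bar{u}_\ell\chi_j|^2\le\|B\bar{u}_\ell\|_{L_2(U,H)}^2\le K(1+|\bar{u}_\ell|^2)$ through $|\Pi_{m}\cdot|\le|\cdot|$ and the Growth of $B$. This bounds the bracket pointwise by $-\mu\|\bar{u}_\ell(s)\|_{V_1}^2+c(1+|\bar{u}_\ell(s)|^2)$, so that, writing $Y_\ell(t):=\int_0^t\|\bar{u}_\ell(s)\|_{V_1}^2\,ds$ and $M_\ell(t):=2\int_0^t(B\bar{u}_\ell(s),u_\ell(s)\,dW_{k_\ell}(s))$,
\begin{equation*}
|u_\ell(t)|^2 + \mu Y_\ell(t) \le |u_\ell(0)|^2 + c\int_0^t(1+|\bar{u}_\ell(s)|^2)\,ds + 2\int_0^t|a_\ell(\bar{u}_\ell(s))|\,|u_\ell(s)-\bar{u}_\ell(s)|\,ds + M_\ell(t).
\end{equation*}

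Next, since $Y_\ell$ is nondecreasing and $|u_\ell|^2\ge0$, taking the supremum over $s\le t$ of $|u_\ell(s)|^2$ and adding the inequality evaluated at $s=t$ yields, up to a universal constant,
\begin{equation*}
\sup_{s\le t}|u_\ell(s)|^2 + \mu Y_\ell(t) \le c\Big[|u_\ell(0)|^2 + \int_0^t(1+|\bar{u}_\ell(s)|^2)\,ds + \int_0^t|a_\ell(\bar{u}_\ell(s))|\,|u_\ell(s)-\bar{u}_\ell(s)|\,ds + \sup_{r\le t}|M_\ell(r)|\Big].
\end{equation*}
Using $(a+b)^q\ge a^q+b^q$ and $(\sum_i a_i)^q\le c_q\sum_i a_i^q$ for nonnegative $a_i$ and $q\ge1$, I would raise this to the power $q$ and take expectations. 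The term $\E\big(\int_0^t(1+|\bar{u}_\ell|^2)ds\big)^q$ is controlled by $c_{T,q}\big(1+\int_0^t\E\sup_{r\le s}|u_\ell(r)|^{2q}ds\big)$ via H\"older and $|\bar{u}_\ell(s)|\le\sup_{r\le s}|u_\ell(r)|$; the correction term is handled directly by \eqref{eq:lemma_corr_term_est2} of Lemma~\ref{lemma:corr_term_est}; and the martingale term is estimated by the Burkholder--Davis--Gundy inequality, whose quadratic variation is dominated, through Cauchy--Schwarz and the Growth of $B$, by $c\int_0^t|u_\ell(s)|^2(1+|\bar{u}_\ell(s)|^2)ds$. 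Factorising as $\sup_{s\le t}|u_\ell(s)|^q\big(\int_0^t(1+|\bar{u}_\ell|^2)ds\big)^{q/2}$ and applying Young's inequality splits this last bound into a small multiple $\delta\,\E\sup_{s\le t}|u_\ell(s)|^{2q}$ plus $c_\delta\,\E\big(\int_0^t(1+|\bar{u}_\ell|^2)ds\big)^q$.

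Writing $F_\ell(t):=\E\sup_{s\le t}|u_\ell(s)|^{2q}$, collecting the above leaves an inequality of the schematic form
\begin{equation*}
F_\ell(t) + \mu^q\,\E Y_\ell(t)^q \le c\big(1+\E|u_\ell(0)|^{2q}\big) + c\int_0^t F_\ell(s)\,ds + \delta\,F_\ell(t) + c_q\,(\mathfrak{c}(m_\ell)\tau_{n_\ell})^q\,\E Y_\ell(t)^q.
\end{equation*}
The \emph{crux}, and the reason for the hypothesis $\mathfrak{c}(m_\ell)\tau_{n_\ell}<\epsilon$, is the term $c_q(\mathfrak{c}(m_\ell)\tau_{n_\ell})^q\,\E Y_\ell(t)^q$ produced by the correction estimate: it is of exactly the same type as the coercive gain $\mu^q\,\E Y_\ell(t)^q$ on the left, and can be absorbed into it only when $\mathfrak{c}(m_\ell)\tau_{n_\ell}$ is sufficiently small. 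I would therefore fix $\epsilon>0$ (depending on $\mu$, $q$ and the constants, but not on $\ell$) so that $c_q\epsilon^q\le\tfrac12\mu^q$, and take $\delta\le\tfrac12$. Here it is essential that Remark~\ref{remark:n_bound} already guarantees $F_\ell(t)<\infty$ and $\E Y_\ell(t)^q<\infty$ for each fixed $\ell$, so that these two terms may legitimately be moved to the left-hand side.

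After this absorption I am left with $F_\ell(t)+\mu^q\E Y_\ell(t)^q\le c(1+\E|u_\ell(0)|^{2q})+c\int_0^t F_\ell(s)\,ds$ for every $t\in[0,T]$ and every admissible $\ell$. Dropping the nonnegative $\mu^q\E Y_\ell(t)^q$ and applying Gronwall's lemma to the finite function $F_\ell$ gives $F_\ell(T)\le c(1+\E|u_\ell(0)|^{2q})$; feeding this back into the displayed inequality controls $\mu^q\E Y_\ell(t)^q$ by the same right-hand side, and summing the two estimates delivers the claim. The main obstacle throughout is not any single estimate but the bookkeeping that keeps the constants independent of $\ell$, so that the single smallness threshold $\epsilon$ on $\mathfrak{c}(m_\ell)\tau_{n_\ell}$ suffices uniformly.
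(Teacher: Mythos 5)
Your proposal is correct and follows essentially the same route as the paper's proof: It\^o's formula in the form \eqref{eq:apriori_start}, coercivity (with the tamed weak coercivity \eqref{eq:taming_retains_weak_coerc}), raising to the power $q$ and taking suprema and expectations, Lemma~\ref{lemma:corr_term_est} for the correction term, Burkholder--Davis--Gundy plus Young for the martingale, absorption of the $(\mathfrak{c}(m_\ell)\tau_{n_\ell})^q\,\E\big(\int_0^t\|\bar u_\ell\|_{V_1}^2ds\big)^q$ term into the coercive gain (the paper's passage from \eqref{eq:higher_moments_proof1} to \eqref{eq:higher_moments_proof2}), and finally Gronwall. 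If anything, you are more explicit than the paper about why the absorption is legitimate (finiteness via Remark~\ref{remark:n_bound}, which for the $V_1$-norm term also uses equivalence of norms on the finite-dimensional space $V_{m_\ell}$), a point the paper leaves implicit.
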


\begin{proof}
Applying the Coercivity assumption in~\eqref{eq:apriori_start},
raising to power $q \geq 1$,
taking the supremum over $s\leq t$ and taking the expectation yields
\begin{equation}
\label{eq:higher_moments_proof1}
\begin{split}
& \E  \sup_{s\leq t}  |u_\ell(s)|^{2q}  + \mu^{q} \E \bigg( \int_0^t \|\bar{u}_\ell(s)\|_{V_1}^2 ds \bigg)^{q} \leq c_{T,q} \bigg[1 + \E|u_\ell(0)|^{2q} \\
& + \E \bigg( \int_0^t |\bar{u}_\ell(s)|^2 ds \bigg)^{q}
  + \E\bigg(\int_0^t |a_\ell(\bar{u}_\ell(s))||u_\ell(s)-\bar{u}_\ell(s)|ds\bigg)^{q}\\
& + \E \sup_{s \leq t}\bigg|\int_0^s (u_\ell(s), B\bar{u}_\ell(s) dW_{k_\ell}(s))\bigg|^{q} \bigg].
\end{split}
\end{equation}
Using Lemma~\ref{lemma:corr_term_est} in~\eqref{eq:higher_moments_proof1} results in
\begin{equation}
\label{eq:higher_moments_proof2}
\begin{split}
& \E  \sup_{s\leq t}  |u_\ell(s)|^{2q}  + \frac{\mu^{q}}{2} \E \bigg( \int_0^t \|\bar{u}_\ell(s)\|_{V_1}^2 ds \bigg)^{q}\\
& \leq c_{T,q} \bigg[1 + \E|u_\ell(0)|^{2q} + \E \bigg( \int_0^t |\bar{u}_\ell(s)|^2 ds \bigg)^{q} \\
& \quad + \E \int_0^t |\bar{u}_\ell(s)|^{2q} ds
+ \E \sup_{s \leq t}\bigg(\int_0^s (u_\ell(r), B\bar{u}_\ell(r) dW_{k_\ell}(r))\bigg)^{q}\bigg].
\end{split}
\end{equation}
Using Burkholder--Davis--Gundy inequality one obtains
\begin{equation*}
\begin{split}
I_\ell & := c_{T,q}\E \sup_{s \leq t}\bigg|\int_0^s (u_\ell(r), B\bar{u}_\ell(r) dW_{k_\ell}(r))\bigg|^{q} \\
& \leq c_{T,q} \E \bigg|\int_0^t |u_\ell(s)|^2 \|B\bar{u}_\ell(s)\|_{L_2(U,H)}^2 ds\bigg|^{q/2}\\
& \leq c_{T,q} \E \Bigg[\sup_{s\leq t}|u_\ell(s)|^{q}\bigg(\int_0^t \|B\bar{u}_\ell(s)\|_{L_2(U,H)}^2ds\bigg)^{q/2} \Bigg].
\end{split}
\end{equation*}
Young's inequality and the Growth assumption on $B$ imply that
\begin{equation*}
\begin{split}
I_\ell & \leq \frac{1}{2}\E \sup_{s\leq t}|u_\ell(s)|^{2q} + c \E \bigg(\int_0^t \|B\bar{u}_\ell(s)\|_{L_2(U,H)}^2ds\bigg)^{q}\\
& \leq \frac{1}{2}\E \sup_{s\leq t}|u_\ell(s)|^{2q} + c\bigg(1+\int_0^t \E \sup_{r\leq s}|u_\ell(r)|^{2q} ds \bigg).
\end{split}
\end{equation*}
Applying this in~\eqref{eq:higher_moments_proof2} leads to
\begin{equation*}
\begin{split}
\frac{1}{2}\E  \sup_{s\leq t}  |u_\ell(s)|^{2q}  +  \frac{\mu^{q}}{2} \E \bigg( \int_0^t \|\bar{u}_\ell(s)\|_{V_1}^2 ds \bigg)^{q}
& \leq  c \bigg[1 + \E|u_\ell(0)|^{2q} \\
& + \int_0^t \E \sup_{r\leq s}|u_\ell(r)|^{2q} ds\bigg].
\end{split}
\end{equation*}
Application of Gronwall's lemma yields
\begin{equation}
\label{eq:higher_moments_proof3}
\E \sup_{s\in [0,t]}|u_\ell(s)|^{2q} + \mu^{q}  \E \bigg(\int_0^t \|\bar{u}_\ell(s)\|_{V_1}^2 ds \bigg)^{q}
\leq c \left(1+\E|u_\ell(0)|^{2q}\right).
\end{equation}
\end{proof}

Now we use Theorem~\ref{thm:apriori2} and Assumption~\ref{ass:interpolation} to obtain the remaining required estimates.

\begin{corollary}[Remaining a priori estimates]
\label{corollary:remaining_apriori}
Let the Growth and Coercivity conditions in Assumption~\ref{ass:operators} be satisfied.
Let Assumptions~\ref{ass:projection} and~\ref{ass:interpolation} hold.
Let $u_\ell(0)$ be bounded in $L^{q_0}(\Omega; H)$, uniformly with respect to $\ell$.
There is $\epsilon \in (0,\infty)$ such that for all 
$\ell \in \{\ell' \in \N: \mathfrak{c}({m_\ell'}\tau_{n_{\ell'}} < \epsilon\}$ we have
\begin{equation}
\label{eq:remaining_apriori1}
\E\int_0^T \|A_1 \bar{u}_\ell\|_{V_1^*}^2 ds \leq c,\quad \E\int_0^T \|B \bar{u}_\ell\|_{L_2(U,H)}^2 ds \leq c.
\end{equation}
Furthermore
\begin{equation}
\label{eq:remaining_apriori4}
\E\int_0^T |u_\ell(s) - \bar{u}_\ell(s)|^2 ds \leq c \tau_{n_\ell}.
\end{equation}
Finally, for some $\eta > 0$,
\begin{equation}
\label{eq:remaining_apriori2}
\E\int_0^T \|\bar{u}_\ell\|_{V_2}^{p(1+\eta)} ds \leq c
\end{equation}
and
\begin{equation}
\label{eq:remaining_apriori3}
\E\int_0^T \|A_2 \bar{u}_\ell\|_{V_2^*}^{p^*(1+\eta)} ds \leq c,\quad \E\int_0^T \|A_{2,\ell} \bar{u}_\ell\|_{V_2^*}^{p^*(1+\eta)} ds \leq c.
\end{equation}
\end{corollary}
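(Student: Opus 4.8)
The plan is to read off each bound from the a priori estimate of Theorem~\ref{thm:apriori2}, applied with a suitable power $q$, combined with the Growth assumptions on $A_1$, $A_2$ and $B$, the taming bound~\eqref{eq:taming_less_original}, the correction estimate of Lemma~\ref{lemma:corr_term_est}, and the interpolation inequality of Assumption~\ref{ass:interpolation}. Throughout I would work on the stated range of $\ell$, i.e.\ those with $\mathfrak{c}(m_\ell)\tau_{n_\ell} < \epsilon$ for the constant $\epsilon$ of Theorem~\ref{thm:apriori2}, so that all the a priori bounds hold with a constant independent of $\ell$. Since $u_\ell(0)$ is bounded in $L^{q_0}(\Omega;H)$ with $q_0 = \max(4,2\rho)$, by Jensen's inequality $\E|u_\ell(0)|^{2q}$ is bounded uniformly in $\ell$ for every $q\in[1,q_0/2]$, which is exactly the range of powers needed below.

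For~\eqref{eq:remaining_apriori1} I would apply the Growth assumption on $A_1$ to get $\|A_1\bar{u}_\ell\|_{V_1^*}^2 \leq K(1+\|\bar{u}_\ell\|_{V_1}^2)$ and the Growth assumption on $B$ to get $\|B\bar{u}_\ell\|_{L_2(U,H)}^2 \leq K(1+|\bar{u}_\ell|^2)$; integrating in time, taking expectations, and invoking Theorem~\ref{thm:apriori2} with $q=1$ closes both bounds, the first directly and the second via $\int_0^T|\bar{u}_\ell|^2\,ds \leq T\sup_{s}|u_\ell(s)|^2$. For~\eqref{eq:remaining_apriori4} I would write $\int_0^T |u_\ell-\bar{u}_\ell|^2\,ds = \tau_{n_\ell}\cdot \tfrac{1}{\tau_{n_\ell}}\int_0^T |u_\ell-\bar{u}_\ell|^2\,ds$ and apply Lemma~\ref{lemma:corr_term_est},~\eqref{eq:lemma_corr_term_est1}, with $q=1$; the right-hand side there is bounded because $\mathfrak{c}(m_\ell)\tau_{n_\ell}<\epsilon$, because Theorem~\ref{thm:apriori2} controls $\E\int_0^T\|\bar{u}_\ell\|_{V_1}^2\,ds$, and because the sup estimate controls $\E\int_0^T|\bar{u}_\ell|^2\,ds$, which produces the factor $\tau_{n_\ell}$.

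The substantive estimate is~\eqref{eq:remaining_apriori2}. Since $\lambda \in [0,2/p)$ one can fix $\eta>0$ with $(1+\eta)\lambda p = 2$ (any $\eta>0$ works when $\lambda=0$), so that Assumption~\ref{ass:interpolation} gives $\|v\|_{V_2}^{p(1+\eta)} \leq c\|v\|_{V_1}^2|v|^{\rho}$ with $\rho = (1-\lambda)p(1+\eta)$, precisely the inequality recorded before the statement of Theorem~\ref{thm:main}. Applying it pointwise to $v=\bar{u}_\ell(s)$, bounding by $\sup_{s}|\bar{u}_\ell(s)|^\rho\int_0^T\|\bar{u}_\ell\|_{V_1}^2\,ds$, and using Young's inequality gives
\[
\E\int_0^T \|\bar{u}_\ell\|_{V_2}^{p(1+\eta)}\,ds \leq c\Big[\E\sup_{s\le T}|u_\ell(s)|^{2\rho} + \E\Big(\int_0^T \|\bar{u}_\ell\|_{V_1}^2\,ds\Big)^2\Big].
\]
Both terms are controlled by Theorem~\ref{thm:apriori2}: the second with $q=2$, and the first with $q=\rho$ (or with $q=1$ and Jensen's inequality when $\rho<1$). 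This is exactly where the choice $q_0=\max(4,2\rho)$ is forced, since the estimate needs $\E|u_\ell(0)|^{2\rho}$ and $\E|u_\ell(0)|^{4}$ to be uniformly bounded. Finally,~\eqref{eq:remaining_apriori3} follows by raising the Growth bound $\|A_2 v\|_{V_2^*}^{p^*} \leq K(1+\|v\|_{V_2}^p)$, and the identical taming bound~\eqref{eq:taming_less_original} for $A_{2,\ell}$, to the power $1+\eta$, giving $\|A_2 \bar{u}_\ell\|_{V_2^*}^{p^*(1+\eta)} \leq c(1+\|\bar{u}_\ell\|_{V_2}^{p(1+\eta)})$ and likewise for $A_{2,\ell}$, and then integrating and invoking~\eqref{eq:remaining_apriori2}.

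The only real obstacle is the bookkeeping in~\eqref{eq:remaining_apriori2}: one must check that $\eta$ can be taken strictly positive, which is precisely the role of the strict inequality $\lambda<2/p$, and that the two powers of the initial data thereby produced, $2\rho$ and $4$, are both dominated by $q_0$. Everything else is a direct substitution of the a priori estimate of Theorem~\ref{thm:apriori2}, the growth bounds, and Lemma~\ref{lemma:corr_term_est}.
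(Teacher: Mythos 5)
Your proposal is correct and follows essentially the same route as the paper: \eqref{eq:remaining_apriori1} and \eqref{eq:remaining_apriori4} from Theorem~\ref{thm:apriori2} with $q=1$ together with the Growth assumptions and Lemma~\ref{lemma:corr_term_est}, then \eqref{eq:remaining_apriori2} from the interpolation inequality $\|v\|_{V_2}^{p(1+\eta)} \leq c\|v\|_{V_1}^2|v|^\rho$ combined with Theorem~\ref{thm:apriori2} at the powers $q=2$ and $q=\rho$ (which is exactly why $q_0=\max(4,2\rho)$ is imposed), and finally \eqref{eq:remaining_apriori3} from the Growth bound on $A_2$ and \eqref{eq:taming_less_original}. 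The only difference is that you spell out the details the paper leaves implicit, such as the explicit choice $(1+\eta)\lambda p=2$ and the Jensen step when $\rho<1$, both of which are fine.
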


\begin{proof}
Inequality~\eqref{eq:remaining_apriori1} follows directly from the Growth assumptions on $A_1$ and $B$ and from Theorem~\ref{thm:apriori2} with $q=1$.
Using~\eqref{eq:lemma_corr_term_est1}, 
together with Theorem~\ref{thm:apriori2} with $q=1$,
yields~\eqref{eq:remaining_apriori4}.
Since $u_\ell(0)$ is assumed to be bounded in $L^{q_0}(\Omega; H)$,
uniformly in $\ell$,
one can conclude, using Theorem~\ref{thm:apriori2}, that
\begin{equation*}
\E \sup_{s\in [0,t]}|u_\ell(s)|^{2\rho} \leq c \,\,\textrm{ and }\,\, \E \bigg(\int_0^t \|\bar{u}_\ell(s)\|_{V_1}^2 ds \bigg)^2 \leq c.
\end{equation*}
This, together with Assumption~\ref{ass:interpolation}, yields~\eqref{eq:remaining_apriori2}.
Finally,~\eqref{eq:remaining_apriori2}, the assumption on the growth of $A_2$ and~\eqref{eq:taming_less_original} lead to~\eqref{eq:remaining_apriori3}.
\end{proof}

\section{Convergence}
\label{sec:convergence}
Having obtained the required a priori estimates we can use compactness arguments to extract weakly convergent subsequences of the approximation.

\begin{lemma}
\label{lemma:weak_limits_from_compactness}
Let the Growth and Coercivity conditions in Assumption~\ref{ass:operators} hold.
Let Assumptions~\ref{ass:projection} and~\ref{ass:interpolation} be satisfied.
Let $u_\ell(0) \to u_0$ in $L^{q_0}(\Omega; H)$.
Let $\tfrac{\mathfrak{c}(m_\ell)}{n_\ell} \to 0$ as $\ell \to \infty$.
Then there is a subsequence of the sequence $\ell$, which we denote $\ell'$, and $u \in \mathcal{L}^2(V_1) \cap \mathcal{L}^p(V_2)$ such that, as $\ell' \to \infty$,
\begin{equation*}
\bar{u}_{\ell'} \weaklyto u\,\, \textrm{ in }\,\, \mathcal{L}^2(V_1)\,\, \textrm{ and in }\,\, \mathcal{L}^p(V_2).
\end{equation*}
Furthermore there are $a_1^\infty \in \mathcal{L}^2(V_1^*)$, $a_2^\infty \in \mathcal{L}^{p^*}(V_2^*)$ and $b^\infty \in \mathcal{L}^2(L_2(U,H))$ such that,  as $\ell' \to \infty$,
\begin{equation*}
A_1\bar{u}_{\ell'} \weaklyto a_1^\infty \,\, \textrm{ in }\,\, \mathcal{L}^2(V_1^*),\,\, A_{2,\ell'}\bar{u}_{\ell'} \weaklyto a_2^\infty \,\, \textrm{ in }\,\, \mathcal{L}^{p^*}(V_2^*)
\end{equation*}
and
\begin{equation*}
B\bar{u}_{\ell'} \weaklyto b^\infty\,\, \textrm{ in } \mathcal{L}^2({L_2(U,H)}).
\end{equation*}
Finally, there is $\xi \in L^{q_0}(\Omega; H)$ such that $\bar{u}_{\ell'}(T) = u_{\ell'}(T) \weaklyto \xi$ in $L^{q_0}(\Omega; H)$.
\end{lemma}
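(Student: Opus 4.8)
The plan is to extract weak limits one function space at a time by invoking the uniform a priori bounds from Corollary~\ref{corollary:remaining_apriori} together with the reflexivity of the spaces involved, then to pass to a common subsequence. The essential tool throughout is the standard fact that a bounded sequence in a reflexive Banach space has a weakly convergent subsequence, applied to the Lebesgue--Bochner spaces $\mathcal{L}^2(V_1)$, $\mathcal{L}^p(V_2)$, $\mathcal{L}^2(V_1^*)$, $\mathcal{L}^{p^*}(V_2^*)$, $\mathcal{L}^2(L_2(U,H))$ and $L^{q_0}(\Omega;H)$. Each of these is reflexive: $V_1$ and $V_2$ are reflexive by assumption, hence so are their duals, $L_2(U,H)$ is a Hilbert space, and the Lebesgue--Bochner space $L^r(\Omega\times[0,T];X)$ is reflexive whenever $1<r<\infty$ and $X$ is reflexive.

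\emph{First} I would establish the boundedness of each sequence in its respective space. Taking $q=1$ in Theorem~\ref{thm:apriori2} gives $\E\int_0^T\|\bar{u}_\ell\|_{V_1}^2\,ds\le c$, so $(\bar{u}_\ell)$ is bounded in $\mathcal{L}^2(V_1)$; inequality~\eqref{eq:remaining_apriori2} gives boundedness in $\mathcal{L}^{p(1+\eta)}(V_2)$ and in particular in $\mathcal{L}^p(V_2)$. Inequality~\eqref{eq:remaining_apriori1} bounds $(A_1\bar{u}_\ell)$ in $\mathcal{L}^2(V_1^*)$ and $(B\bar{u}_\ell)$ in $\mathcal{L}^2(L_2(U,H))$, while~\eqref{eq:remaining_apriori3} bounds $(A_{2,\ell}\bar{u}_\ell)$ in $\mathcal{L}^{p^*(1+\eta)}(V_2^*)$ and hence in $\mathcal{L}^{p^*}(V_2^*)$. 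Finally, since $u_\ell(0)$ is bounded in $L^{q_0}(\Omega;H)$ and $q_0\ge 4$, Theorem~\ref{thm:apriori2} with $q=q_0/2$ yields $\E\sup_{s\le T}|u_\ell(s)|^{q_0}\le c$, so in particular $(u_\ell(T))=(\bar{u}_\ell(T))$ is bounded in $L^{q_0}(\Omega;H)$.

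\emph{Second}, I would apply weak sequential compactness successively and use a diagonal argument to pass to a single subsequence $\ell'$ along which all six sequences converge weakly to limits $u_1\in\mathcal{L}^2(V_1)$, $u_2\in\mathcal{L}^p(V_2)$, $a_1^\infty$, $a_2^\infty$, $b^\infty$ and $\xi$ respectively. The one genuine subtlety — and the step I expect to be the main obstacle — is identifying the two weak limits of $(\bar{u}_{\ell'})$ as a single element $u$ belonging to $\mathcal{L}^2(V_1)\cap\mathcal{L}^p(V_2)$. Since $V=V_1\cap V_2$ is dense in both $V_1$ and $V_2$, testing against functions of the form $\one{1}_E(\omega)\one{1}_{[s,t]}(r)\,\varphi$ with $\varphi\in V$ and using that such test functions lie in the dual of both $\mathcal{L}^2(V_1)$ and $\mathcal{L}^p(V_2)$, one shows that the two weak limits agree as $H$-valued (indeed $V^*$-valued) functions, whence $u_1=u_2=:u$ lies in the intersection $\mathcal{L}^2(V_1)\cap\mathcal{L}^p(V_2)$. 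Concretely, for any such test function both weak convergences give
\begin{equation*}
\E\int_s^t \langle \bar{u}_{\ell'}(r),\varphi\rangle\,\one{1}_E\,dr \to \E\int_s^t \langle u_1(r),\varphi\rangle\,\one{1}_E\,dr
\end{equation*}
and likewise with $u_2$, so $u_1=u_2$ almost everywhere by the density of $V$ and the arbitrariness of $E$, $s$ and $t$. This completes the extraction, and the stated convergences follow by relabelling the limits.
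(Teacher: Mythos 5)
Your proposal is correct and follows essentially the same route as the paper: uniform bounds on all six sequences from Theorem~\ref{thm:apriori2} and Corollary~\ref{corollary:remaining_apriori}, followed by weak sequential compactness of bounded sequences in the reflexive Lebesgue--Bochner spaces, with a common subsequence extracted. The only difference is that you also spell out the identification of the two weak limits of $(\bar{u}_{\ell'})$ in $\mathcal{L}^2(V_1)$ and $\mathcal{L}^p(V_2)$ as a single $u\in\mathcal{L}^2(V_1)\cap\mathcal{L}^p(V_2)$ --- a point the paper's proof leaves implicit --- and your density argument for that step is sound.
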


\begin{proof}
The sequence $(\bar{u}_\ell)$ is bounded in $\mathcal{L}^2(V_1)$ due to Theorem~\ref{thm:apriori2} and in $\mathcal{L}^p(V_2)$ due to Corollary~\ref{corollary:remaining_apriori}.
The sequences $(A_1 \bar{u}_\ell)$, $(A_{2,\ell} \bar{u}_\ell)$ and $(B\bar{u}_\ell)$ are bounded in $\mathcal{L}^2(V_1^*)$, $\mathcal{L}^{p^*}(V_2^*)$ and in
$\mathcal{L}^{2}(L_2(U,H))$ respectively, due to Corollary~\ref{corollary:remaining_apriori}.
Finally the sequence $(u_\ell(T))$ is bounded in $L^{q_0}(\Omega; H)$ due to Theorem~\ref{thm:apriori2}.

Since it is assumed that $V_1$, $V_2$ are reflexive, it follows that $\mathcal{L}^2(V_1)$ and $\mathcal{L}^p(V_2)$ are reflexive.
A bounded sequence in a reflexive Banach space must have a weakly convergent subsequence (see e.g. Br\'ezis~\cite[Theorem 3.18]{brezis:functional}).
Applying this to the sequences in question concludes the proof of the lemma.
\end{proof}

Let $a^\infty := a_1^\infty + a_2^\infty$.
Then $a^\infty \in \mathcal{L}^{p^*}(V^*)$.
Due to Lemma~\ref{lemma:weak_limits_from_compactness} $a_\ell(\bar{u}_\ell) \weaklyto a^\infty$ in $\mathcal{L}^{p^*}(V^*)$
as $\ell' \to \infty$, provided that $\tfrac{\mathfrak{c}(m_\ell)}{n_\ell} \to 0$.
The following lemma provides the equation satisfied by the weak limits of the approximations.

\begin{lemma}
\label{lemma:limit_equation}
Let the Growth and Coercivity conditions in Assumption~\ref{ass:operators} hold.
Let Assumptions~\ref{ass:projection} and~\ref{ass:interpolation} be satisfied.
Let $u_\ell(0) \to u_0$ in $L^{q_0}(\Omega; H)$.
Let $\tfrac{\mathfrak{c}(m_\ell)}{n_\ell} \to 0$ as $\ell \to \infty$.
Then there is an $H$-valued adapted continuous process $\tilde{u}$ on $[0,T]$ 
such that $u=\tilde{u}$ $dt\times \P$-almost everywhere on $(0,T)\times \Omega$.
Furthermore, for almost every $(t,\omega) \in (0,T) \times \Omega$,
\begin{equation}
\label{eq:limit_equation1}
\tilde{u}(t) = u_0 + \int_0^t a^\infty (s) ds + \int_0^t b^\infty(s) dW(s)
\end{equation}
and almost surely
\begin{equation}
\label{eq:limit_equation2}
\tilde{u}(T) = u_0 + \int_0^T a^\infty (s) ds + \int_0^T b^\infty(s) dW(s).
\end{equation}
\end{lemma}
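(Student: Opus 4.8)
The plan is to pass to the weak limit in the tested form of the discrete equation~\eqref{eq:disc1}, identify the $dt\times\P$-class $u$ with the process defined by the right-hand side of~\eqref{eq:limit_equation1}, and then upgrade that right-hand side to an $H$-valued continuous process by invoking the standard It\^o formula in the Gelfand triple.

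First I would fix $v\in V_{m_0}$ for some $m_0$ and restrict to indices $\ell'$ with $m_{\ell'}\ge m_0$, so that $\Pi_{m_{\ell'}}v=v$. Testing~\eqref{eq:disc1} against $v$ and using parts (1)--(3) of Assumption~\ref{ass:projection} to convert the projected operators into genuine duality pairings gives
\begin{equation*}
(u_{\ell'}(t),v) = (u_{\ell'}(0),v) + \int_0^t \langle A_1\bar u_{\ell'}(s) + A_{2,\ell'}\bar u_{\ell'}(s),v\rangle\,ds + \int_0^t (v, B\bar u_{\ell'}(s)\,dW_{k_{\ell'}}(s)).
\end{equation*}
I would then multiply by $\eta\,\phi(t)$, where $\phi\in L^2(0,T)$ is deterministic and $\eta$ is a bounded $\F_T$-measurable random variable, integrate in $t$ over $[0,T]$, and take expectations. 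Each resulting term is a bounded linear functional of one of the weakly converging sequences of Lemma~\ref{lemma:weak_limits_from_compactness}: the first term acts on $u_{\ell'}\weaklyto u$ in $\mathcal{L}^2(H)$, where I use~\eqref{eq:remaining_apriori4} to force $u_{\ell'}-\bar u_{\ell'}\to 0$ strongly in $\mathcal{L}^2(H)$ so that $u_{\ell'}$ and $\bar u_{\ell'}$ share the same weak limit; after a Fubini step (with $\Phi(s):=\int_s^T\phi(t)\,dt$) the drift terms act on $A_1\bar u_{\ell'}\weaklyto a_1^\infty$ in $\mathcal{L}^2(V_1^*)$ and $A_{2,\ell'}\bar u_{\ell'}\weaklyto a_2^\infty$ in $\mathcal{L}^{p^*}(V_2^*)$. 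Although $\eta$ is not adapted, these spaces are closed subspaces of the corresponding Bochner--Lebesgue spaces over $\Omega\times(0,T)$, so weak convergence in them lifts to weak convergence in the ambient spaces, whose duals contain the (possibly non-adapted) test elements $\eta\,\Phi(\cdot)\,v$.

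The stochastic term requires more care. Writing $\int_0^\cdot(v,B\bar u_{\ell'}\,dW_{k_{\ell'}})=\int_0^\cdot(v,(B\bar u_{\ell'})P_{k_{\ell'}}\,dW)$, where $P_k$ is the orthogonal projection of $U$ onto $\mathrm{span}\{\chi_1,\dots,\chi_k\}$, I would view $g\mapsto \E\big[\eta\int_0^T\phi(t)\int_0^t(v,g(s)\,dW(s))\,dt\big]$ as a bounded linear functional on the Hilbert space $\mathcal{L}^2(L_2(U,H))$ (boundedness follows from the It\^o isometry together with the boundedness of $\eta$ and $\phi$); as such it is weakly continuous and hence insensitive to the non-adaptedness of $\eta$. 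It then remains to verify that the integrands converge weakly, $(B\bar u_{\ell'})P_{k_{\ell'}}\weaklyto b^\infty$ in $\mathcal{L}^2(L_2(U,H))$, and this is where $k_{\ell'}\to\infty$ enters, via the weak--strong pairing $\langle B\bar u_{\ell'},\,g\,P_{k_{\ell'}}\rangle$ with $g\,P_{k_{\ell'}}\to g$ strongly for every fixed $g$. Collecting the three limits, and noting that the products $\eta\,\phi$ are total in $L^2(\Omega\times(0,T))$, I obtain, for each $v$ in a fixed countable dense subset of $V$ contained in $\bigcup_m V_m$,
\begin{equation*}
(u(t),v) = (u_0,v) + \int_0^t\langle a^\infty(s),v\rangle\,ds + \int_0^t (v,b^\infty(s)\,dW(s))\quad\text{for }dt\times\P\text{-a.e. }(t,\omega).
\end{equation*}

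Finally I would set $\tilde u(t):=u_0+\int_0^t a^\infty(s)\,ds+\int_0^t b^\infty(s)\,dW(s)$, a priori a continuous $V^*$-valued adapted process (the stochastic integral being a continuous $H$-valued martingale and $a^\infty\in\mathcal{L}^{p^*}(V^*)$). The displayed identity says precisely that $\langle\tilde u(t),v\rangle=(u(t),v)$ for all $v$ in a dense subset of $V$ and a.e.\ $(t,\omega)$, so $u=\tilde u$ in $V^*$ for $dt\times\P$-a.e.\ $(t,\omega)$. Since $u\in\mathcal{L}^2(V_1)\cap\mathcal{L}^p(V_2)$ is $V$-valued almost everywhere and solves an equation whose drift lies in $\mathcal{L}^{p^*}(V^*)$ and whose diffusion lies in $\mathcal{L}^2(L_2(U,H))$, the standard It\^o formula for the square of the $H$-norm in a Gelfand triple (as in the work of Krylov and Rozovskii on monotone stochastic evolution equations) upgrades $\tilde u$ to an $H$-valued continuous adapted process coinciding with $u$ almost everywhere and satisfying~\eqref{eq:limit_equation1}; continuity of $\tilde u$ then yields~\eqref{eq:limit_equation2} by evaluating at $t=T$ almost surely. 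I expect the main obstacle to be the stochastic-integral passage, namely reconciling the truncated noise $W_{k_{\ell'}}$ with the non-adapted test variable $\eta$, together with the regularity step that converts the $V^*$-valued identity into an $H$-valued continuous equation.
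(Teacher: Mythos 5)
Your proof is correct and follows the same overall strategy as the paper: pass to weak limits in the integrated, tested form of the discrete equation~\eqref{eq:disc1} using Lemma~\ref{lemma:weak_limits_from_compactness}, then invoke the It\^o formula for the square of the $H$-norm (Pardoux's version, cited by the paper as~\cite{pardoux:thesis}, covers the sum structure $a^\infty=a_1^\infty+a_2^\infty\in\mathcal{L}^2(V_1^*)+\mathcal{L}^{p^*}(V_2^*)$ needed here) to produce the continuous $H$-valued process $\tilde u$. The execution differs in three respects. First, the paper tests against bounded \emph{adapted} $V_M$-valued processes $\varphi$ and passes to the limit through the linear bounded, hence weakly-weakly continuous, operators $(Gv)(t)=\int_0^t v(s)\,ds$ and $(Hv)(t)=\int_0^t v(s)\,dW(s)$, whereas you test against products $\eta\,\phi(t)\,v$ with $\eta$ non-adapted and $\F_T$-measurable, which forces your (correct) observation that weak convergence in the closed subspace of adapted processes lifts to the ambient Bochner space. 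Second, you are more explicit than the paper on two points it glosses over: that $u_{\ell'}$ and $\bar u_{\ell'}$ share the same weak limit because of~\eqref{eq:remaining_apriori4}, and that the truncated noise is handled by the weak--strong pairing $\langle B\bar u_{\ell'}, g P_{k_{\ell'}}\rangle$ with $gP_{k_{\ell'}}\to g$ strongly; both points are genuinely needed and your treatment is sound. Third, and most substantively, for~\eqref{eq:limit_equation2} you simply evaluate the continuous version at $t=T$, which indeed suffices for the statement as written; the paper instead runs a separate limit passage at $t=T$ against $\F_T$-measurable test variables in order to identify $\tilde u(T)$ with the weak limit $\xi$ of $u_{\ell'}(T)$ obtained in Lemma~\ref{lemma:weak_limits_from_compactness}. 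That extra identification, namely $u_{\ell'}(T)\weaklyto u(T)$ in $L^2(\Omega;H)$, is not part of the lemma's statement, but it is used later in the proof of Theorem~\ref{thm:main} to upgrade weak convergence of $u_\ell(T)$ to strong convergence; if one adopted your proof, that fact would still have to be established separately at that point, e.g.\ by exactly the paper's argument combining continuity of $\tilde u$ with the limit equation tested at $t=T$.
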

In the rest of this paper we will write $u$ instead of $\tilde{u}$ for notational simplicity.
\begin{proof}
Fix $M\in \N$.
Let $\varphi$ be a $V_M$-valued adapted stochastic process such that $|\varphi(t)| < M$ for all $t\in [0,T]$ and $\omega \in \Omega$.
For $g\in U$ let $\tilde{\Pi}_m g:= \sum_{j=1}^m (\chi_j,g)_U \chi_j$ and note that for any $v\in V$ one has $Bv\tilde{\Pi}_{m} \in L_2(U,H)$.
From~\eqref{eq:disc1} one observes that
\begin{equation}
\label{eq:limit_eq_proof_1}
\begin{split}
(u_{\ell'}(t),\varphi(t)) = (u_{\ell'}(0), & \varphi(t))
+ \bigg\langle \int_0^t a_{\ell'}(\bar{u}_{\ell'}(s)) ds, \varphi(t) \bigg\rangle\\
& + \bigg(\int_0^t B \bar{u}_{\ell'}(s)\tilde{\Pi}_{m_{\ell'}}dW(s) ,\varphi(t)\bigg).
\end{split}
\end{equation}
Let $G:\mathcal{L}^{p^*}(V^*) \to \mathcal{L}^{p^*}(V^*)$ be given by $(Gv)(t) := \int_0^t v(s)ds$.
Moreover, let $H:\mathcal{L}^2(L_2(U,H)) \to \mathcal{L}^2(H)$ be given by $(Hv)(t) := \int_0^t v(s) dW(s)$.
Integrating~\eqref{eq:limit_eq_proof_1} from $0$ to $T$ and taking the expectation yields
\begin{equation*}
\begin{split}
& \E\int_0^T (u_{\ell'}(t), \varphi(t)) dt =  \E\int_0^T (u_{\ell'}(0),\varphi(t))dt
\\ & + \E\int_0^T \big\langle (G a_{\ell'}(\bar{u}_{\ell'}))(t), \varphi(t) \big\rangle dt + \E \int_0^T \big( (H \, B \bar{u}_{\ell'}\tilde{\Pi}_{m_{\ell'}} )(t) ,\varphi(t)\big) dt.
\end{split}
\end{equation*}
The operator $G$ is linear and bounded and as such it is weakly-weakly continuous.
This operator $H$ is clearly linear.
Furthermore, due to It\^o's isometry,
\begin{equation*}
\begin{split}
\|Hv & \|_{\mathcal{L}^2(H)}^2  = \E\int_0^T |(Hv)(s)|^2 ds\\
& = \int_0^T \E \bigg|\int_0^t v(s) dW(s)\bigg|^2 dt = \E\int_0^T \int_0^t |v(s)|^2 ds dt \leq T \|v\|_{\mathcal{L}^2(L_2(U,H))}^2.
\end{split}
\end{equation*}
Thus the operator $H$ is also bounded.
It follows that $H$ is also weakly-weakly continuous.
Therefore, taking the limit as $\ell' \to \infty$ and using Lemma~\ref{lemma:weak_limits_from_compactness}, one obtains
\begin{equation*}
\begin{split}
& \E\int_0^T (u(t), \varphi(t)) dt =  \E\int_0^T (u_0,\varphi(t))dt
\\ & + \E\int_0^T \big\langle (G a^\infty)(t), \varphi(t) \big\rangle dt + \E \int_0^T \big( (H b^\infty)(t) ,\varphi(t)\big) dt.
\end{split}
\end{equation*}
This holds for any $\varphi$ as specified at the beginning of the proof.
By letting $M\to \infty$ and using the limited completeness of the Galerkin scheme it follows that this also holds for any $\varphi \in \mathcal{L}^{p}(V)$.
Thus
\begin{equation}
\label{eq:limit_eq_proof1a}
u(t) = u_0 + \int_0^t a^\infty (s) ds + \int_0^t b^\infty(s) dW(s)
\end{equation}
holds for almost all $(t,\omega) \in (0,T) \times \Omega$.

Let $\varphi$ be a $V_M$-valued and $\F_T$-measurable random variable such that $\E\|\varphi\|_V^2 < \infty$.
Setting $t=T$ in~\eqref{eq:limit_eq_proof_1} and taking the expectation yields
\begin{equation*}
\begin{split}
\E (u_{\ell'}(T), \varphi) = &  \E (u_{\ell'}(0),\varphi)
\\ & + \E \big\langle (G a_{\ell'}(\bar{u}_{\ell'}))(T), \varphi \big\rangle + \E \big( (H \,B\bar{u}_{\ell'}\tilde{\Pi}_{m'} )(T) ,\varphi\big).
\end{split}
\end{equation*}
Let $\ell' \to \infty$.
The weak-weak continuity of the operators $G$ and $H$,
together with Lemma~\ref{lemma:weak_limits_from_compactness}, 
implies that
\begin{equation}
\label{eq:limit_eq_proof_2}
\E (\xi, \varphi) =  \E (u_0,\varphi)
+ \E \big\langle (G a^\infty)(T), \varphi \big\rangle + \E \big( (H b^\infty)(T) ,\varphi\big).
\end{equation}
Letting $M\to \infty$ and again using the limited completeness of the Galerkin scheme shows that the above equality holds for any $\F_T$-measurable $\varphi \in L^2(\Omega;V)$.
If one now applies It\^o's formula to~\eqref{eq:limit_eq_proof1a} then one obtains an adapted process $\tilde{u}$ with paths in $C([0,T];H)$ that is equal to $u$ almost surely.
Furthermore, for any $\varphi \in L^2(\Omega; V)$ and due to continuity of $\tilde{u}$,
\begin{equation*}
\begin{split}
\E(\xi - \tilde{u}(T),\varphi) & = \lim_{t\to T}\E(\xi - \tilde{u}(t),\varphi) \\
& = \lim_{t\to T} \E \bigg\langle \int_t^T a^\infty(s) ds + \int_t^T b^\infty(s)dW(s),\varphi \bigg\rangle = 0.
\end{split}
\end{equation*}
Thus $\xi = \tilde{u}(T)$.
This together with~\eqref{eq:limit_eq_proof_2} implies~\eqref{eq:limit_equation2}.
\end{proof}

All that remains to be done to prove Theorem~\ref{thm:main} is to identify $a^\infty$ with $Au$ and $b^\infty$ with $Bu$ and to show strong convergence of $u_\ell(T)$ to $u(T)$.
To that end we would like to use monotonicity of $A$.
In order to overcome the difficulty arising from the fact that the tamed operator $A_{2,\ell}$ does not preserve the monotonicity property of $A_2$ we need the following lemma.
\begin{lemma}
\label{lemma:strong_conv_of_tam_correction}
Let the Growth and Coercivity conditions in Assumption~\ref{ass:operators} hold.
Let Assumptions~\ref{ass:projection} and~\ref{ass:interpolation} be satisfied.
Let $u_\ell(0) \to u_0$ in $L^{q_0}(\Omega; H)$.
Let $\tfrac{\mathfrak{c}(m_\ell)}{n_\ell} \to 0$ as $\ell \to \infty$.
Then
\begin{equation*}
\E\int_0^T \|A_2\bar{u}_\ell(s)-A_{2,\ell} \bar{u}_\ell(s)\|_{V_2^*}^{p^*} ds \to 0 \,\, \textrm{ as }\,\, \ell \to \infty.
\end{equation*}
\end{lemma}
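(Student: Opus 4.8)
The plan is to reduce the whole statement to the scalar taming factor and then spend exactly the surplus integrability of $\|A_2\bar u_\ell\|_{V_2^*}$ supplied by Corollary~\ref{corollary:remaining_apriori}, converting it into the factor $\mathfrak{c}(m_\ell)/n_\ell$ that the coupling hypothesis forces to zero. With $T_\ell$ as in~\eqref{eq:def_of_T_ell}, definition~\eqref{eq:def:taming} gives $A_2 v-A_{2,\ell}v=(1-T_\ell(v))A_2 v$, hence for every $v\in V_2$
\begin{equation*}
\|A_2 v-A_{2,\ell}v\|_{V_2^*}^{p^*}=\big(1-T_\ell(v)\big)^{p^*}\|A_2 v\|_{V_2^*}^{p^*},\qquad 1-T_\ell(v)=\frac{n_\ell^{-1/2}|\Pi_{m_\ell}A_2 v|}{1+n_\ell^{-1/2}|\Pi_{m_\ell}A_2 v|}.
\end{equation*}
Since $1-T_\ell(v)\in[0,1)$ the integrand is already bounded; the point is to show it is typically small.

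First I would fix $\eta>0$ as in Corollary~\ref{corollary:remaining_apriori}, shrinking it if necessary so that $\eta\in(0,1]$ (the bound~\eqref{eq:remaining_apriori3} persists for any smaller exponent since $(0,T)\times\Omega$ carries finite measure $T$, so H\"older applies). The elementary inequality $\tfrac{x}{1+x}\le\min(1,x)\le x^{\eta}$, valid for all $x\ge 0$ and $\eta\in[0,1]$, applied with $x=n_\ell^{-1/2}|\Pi_{m_\ell}A_2 v|$, gives
\begin{equation*}
1-T_\ell(v)\le\big(n_\ell^{-1/2}|\Pi_{m_\ell}A_2 v|\big)^{\eta}.
\end{equation*}
Next I would bound $|\Pi_{m_\ell}A_2 v|$ through the fourth condition in Assumption~\ref{ass:projection}, using $\|A_2 v\|_{V^*}\le\|A_2 v\|_{V_2^*}$ (because $A_2 v\in V_2^*\subseteq V^*$ and the $V^*$-norm is an infimum), to get $|\Pi_{m_\ell}A_2 v|\le\mathfrak{c}(m_\ell)^{1/2}\|A_2 v\|_{V_2^*}$. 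Combining the last two displays yields the pointwise estimate
\begin{equation*}
\|A_2 v-A_{2,\ell}v\|_{V_2^*}^{p^*}\le\Big(\tfrac{\mathfrak{c}(m_\ell)}{n_\ell}\Big)^{\eta p^*/2}\|A_2 v\|_{V_2^*}^{p^*(1+\eta)}.
\end{equation*}

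Finally I would put $v=\bar u_\ell(s)$, integrate over $[0,T]$ and take expectations. The exponent $p^*(1+\eta)$ is precisely the one controlled by~\eqref{eq:remaining_apriori3}, so
\begin{equation*}
\E\int_0^T\|A_2\bar u_\ell(s)-A_{2,\ell}\bar u_\ell(s)\|_{V_2^*}^{p^*}ds\le\Big(\tfrac{\mathfrak{c}(m_\ell)}{n_\ell}\Big)^{\eta p^*/2}\,\E\int_0^T\|A_2\bar u_\ell(s)\|_{V_2^*}^{p^*(1+\eta)}ds\le c\Big(\tfrac{\mathfrak{c}(m_\ell)}{n_\ell}\Big)^{\eta p^*/2},
\end{equation*}
and the right-hand side tends to $0$ by the hypothesis $\mathfrak{c}(m_\ell)/n_\ell\to 0$. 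The only genuine subtlety—and the main obstacle if one proceeds carelessly—is exponent bookkeeping: one must spend exactly the surplus integrability $\eta$ from~\eqref{eq:remaining_apriori3} on the taming factor, through the exponent $\eta$ in $\min(1,x)\le x^{\eta}$, so that the leftover power of $\mathfrak{c}(m_\ell)/n_\ell$ survives. This is exactly where the strict inequality $\lambda<2/p$ in Assumption~\ref{ass:interpolation}, which is what guarantees $\eta>0$, enters the argument.
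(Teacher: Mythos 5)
Your proof is correct, and it takes a genuinely different route from the paper's. The paper splits the integral by truncating at a level $M$: on the set $\{\|A_2\bar u_\ell\|_{V_2^*}\leq M\}$ it bounds $1-T_\ell(\bar u_\ell)\leq n_\ell^{-1/2}|\Pi_{m_\ell}A_2\bar u_\ell|\leq (\mathfrak{c}(m_\ell)\tau_{n_\ell}/T)^{1/2}M$, and on the complement it invokes uniform integrability of $\big(\|A_2\bar u_\ell\|_{V_2^*}^{p^*}\big)_\ell$, which follows from the surplus moment bound~\eqref{eq:remaining_apriori3}; the conclusion is then an $\epsilon/2+\epsilon/2$ argument (first choose $M$, then choose $\ell$ large), with no explicit rate. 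You instead spend the same surplus integrability pointwise, via $\tfrac{x}{1+x}\leq\min(1,x)\leq x^\eta$ applied to $x=n_\ell^{-1/2}|\Pi_{m_\ell}A_2 v|$, together with the same two ingredients the paper uses (the projection bound of Assumption~\ref{ass:projection} and the embedding estimate $\|A_2v\|_{V^*}\leq\|A_2v\|_{V_2^*}$), obtaining the clean deterministic estimate
\begin{equation*}
\|A_2 v-A_{2,\ell}v\|_{V_2^*}^{p^*}\leq\Big(\tfrac{\mathfrak{c}(m_\ell)}{n_\ell}\Big)^{\eta p^*/2}\|A_2 v\|_{V_2^*}^{p^*(1+\eta)},
\end{equation*}
so that~\eqref{eq:remaining_apriori3} immediately yields convergence with the explicit rate $c\big(\mathfrak{c}(m_\ell)/n_\ell\big)^{\eta p^*/2}$. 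This is a real advantage: your argument quantifies the speed of convergence of the taming correction, whereas the paper's truncation argument is purely qualitative. The price is the restriction $\eta\leq 1$ needed for $\min(1,x)\leq x^\eta$, but your remark that $\eta$ from Corollary~\ref{corollary:remaining_apriori} may be shrunk by H\"older's inequality on the finite measure space $(0,T)\times\Omega$ disposes of this correctly; your identification of the strict inequality $\lambda<2/p$ as the source of $\eta>0$ also matches the paper's own discussion.
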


\begin{proof}
Consider some $M>0$.
Recall that $T_\ell$ is given by~\eqref{eq:def_of_T_ell}.
Then
\begin{equation*}
\begin{split}
& I_\ell := \E\int_0^T \|A_{2,\ell} \bar{u}_\ell(s) - A_2\bar{u}_\ell(s)\|_{V_2^*}^{p^*} ds \\
& = \E\int_0^T \big(1-T_\ell(\bar{u}_\ell(s))\big)^{p^*}\|A_2 \bar{u}_\ell(s)\|_{V_2^*}^{p^*}\one{1}_{\{\|A_2 \bar{u}_\ell(s)\|_{V_2^*} \leq M\}} ds\\
& \quad+ \E\int_0^T \big(1-T_\ell(\bar{u}_\ell(s))\big)^{p^*}\|A_2 \bar{u}_\ell(s)\|_{V_2^*}^{p^*}\one{1}_{\{\|A_2 \bar{u}_\ell(s)\|_{V_2^*} > M\}} ds\\
& =: I_{1,\ell,M} + I_{2,\ell,M}.
\end{split}
\end{equation*}
It is observed that
\begin{equation*}
\begin{split}
I_{1,\ell,M} & \leq \E\int_0^T \frac{n_\ell^{-1/2}|\Pi_{m_\ell} A_2 \bar{u}_\ell(s)|}{1+n_\ell^{-1/2}|\Pi_{m_\ell} A_2 \bar{u}_\ell(s)|} \|A_2 \bar{u}_\ell(s)\|_{V_2^*}^{p^*}\one{1}_{\{\|A_2 \bar{u}_\ell(s)\|_{V_2^*} \leq M\}} ds\\
& \leq \E\int_0^T \frac{\tau_{n_\ell}^{1/2}T^{-1/2} \mathfrak{c}(m)^{1/2}M}{1+n_\ell^{-1/2}|\Pi_{m_\ell} A_2 \bar{u}_\ell(s)|} M^{p^*} ds
\leq (\mathfrak{c}(m)\tau_{n_\ell} )^{1/2} T^{1/2} M^{1+p^*}.
\end{split}
\end{equation*}
Recall that due to Corollary~\ref{corollary:remaining_apriori} one knows that
\begin{equation*}
\E\int_0^T \|A_2 \bar{u}_\ell(s)\|_{V_2^*}^{p^*(1+\eta)} ds < c
\end{equation*}
with $c$ independent of $\ell$.
Thus the sequence $\big(\|A_2 \bar{u}_\ell\|_{V_2^*}^{p^*}\big)_{\ell \in \N}$ is uniformly integrable on $(0,T)\times \Omega$ with respect to $dt \times P$.
Hence for any $\epsilon > 0$ there exists $M$ such that $I_{2,\ell,M} < \epsilon/2$ for all $\ell$.
Finally, since $\tfrac{\mathfrak{c}(m_\ell)}{n_\ell} \to 0$ as $\ell \to \infty$, one can choose $\ell$ large such that $I_{1,\ell,M} < \epsilon/2$.
\end{proof}

We now prove Theorem~\ref{thm:uniqueness}. 
This is needed to later show that the whole sequence of approximations
converges rather than just a subsequence.

\begin{proof}[Proof of Theorem~\ref{thm:uniqueness}]
Assume that $u_1$ and $u_2$ are two distinct solutions to~\eqref{eq:see}
such that $u_1(0) = u_2(0) = u_0$.
One would now like to apply It\^o's formula for the square of the norm from Pardoux~\cite[Chapitre 2, Theoreme 5.2]{pardoux:thesis}.
To that end one immediately observes that $u_1 - u_2 \in \mathcal{L}^2(V_1) \cap \mathcal{L}^p(V_2)$ and that $u_1(0) - u_2(0) = 0 \in L^2(\Omega;H)$.
Moreover
\begin{equation*}
\|Au_1 - Au_2\|_{\mathcal{L}^2(V_1^*) + \mathcal{L}^{p^*}(V_2^*)} = \|A_1 u_1 - A_1 u_2\|_{\mathcal{L}^2(V_1^*)} + \|A_2 u_1 - A_2 u_2\|_{\mathcal{L}^{p^*}(V_2^*)}.
\end{equation*}
Using the Growth assumption on $A_1$ one observes that
\begin{equation*}
\begin{split}
& \E \int_0^T \|A_1 u_1(s) - A_1 u_2(s)\|_{V_1^*}^2 ds \leq \E \int_0^T 2\big[\|A_1 u_1(s)\|_{V_1^*}^2 + \|A_1 u_2(s)\|_{V_1^*}^2 \big] ds\\
& \leq K \E \int_0^T 2\big[(1+\|u_1(s)\|_{V_1}^2) + (1+\|u_2(s)\|_{V_1}^2) \big] ds\\
& \leq c(1 + \|u_1\|_{\mathcal{L}^2(V_1)}^2 + \|u_2\|_{\mathcal{L}^2(V_1)}^2) < \infty.
\end{split}
\end{equation*}
Also, using the Growth assumption on $A_2$ one obtains
\begin{equation*}
\begin{split}
\E \int_0^T \|A_2 u_1(s) - A_2 u_2(s)\|_{V_2^*}^{p^*} ds
\leq c(1 + \|u_1\|_{\mathcal{L}^p(V_2)}^p + \|u_2\|_{\mathcal{L}^p(V_2)}^p) < \infty.
\end{split}
\end{equation*}
Thus $Au_1  - Au_2 \in \mathcal{L}^2(V_1^*)\cup \mathcal{L}^{p^*}(V_2^*)$.
Finally, using the Growth assumption on $B$ one deduces that $Bu_1 - Bu_2 \in \mathcal{L}^2(L_2(U,H))$.
Hence the afromentioned It\^o's formula for the square of the norm can be applied, yielding
\begin{equation*}
\begin{split}
e^{-Kt}|u_1(t) - u_2(t)|^2  =
& -K \int_0^t e^{-Ks} |u_1(s) - u_2(s)|^2 ds\\
& + \int_0^t e^{-Ks}\big[ 2\langle Au_1(s) - Au_2(s),u_1(s)-u_2(s)\rangle \\
 & + \|Bu_1(s) - Bu_2(s)\|_{L_2(U,H)}^2 \big] ds + M(t),
\end{split}
\end{equation*}
where
\begin{equation*}
M(t) := \int_0^t e^{-Ks}\big(u_1(s)-u_2(s),(Bu_1(s)-Bu_2(s))dW(s)\big).
\end{equation*}
One then observes, due to the monotonicity of $A:V\times \Omega \to V^*$, that
\begin{equation*}
e^{-Kt}|u_1(t)-u_2(t)|^2 \leq M(t)
\end{equation*}
and hence $M$ is non-negative.
It is also a real-valued continuous local martingale, and thus a supermartingale.
Furthermore it starts from $0$ and thus, almost surely, $M(t) = 0 $ for all $t\in [0,T]$.
One thus concludes that
$u_1(t) = u_2(t)$ for all $t\in [0,T]$ almost surely.
\end{proof}

Finally we can prove Theorem~\ref{thm:main}.

\begin{proof}[Proof of Theorem~\ref{thm:main}]
Recall that $a_\ell(v) := \Pi_{m_\ell} [A_1 v + A_{2,\ell} v]$.
Applying It\^o's formula to the scheme~\eqref{eq:disc1} and taking expectations yields
\begin{equation*}
\begin{split}
e^{-KT} & \E |u_{\ell'}(T)|^2  =  \E|u_{\ell'}(0)|^2 -K \E \int_0^T e^{-Ks} |u_{\ell'}(s)|^2 ds \\
& + \E \int_0^T e^{-Ks}\Big( 2 \langle a_{\ell'}(\bar{u}_{\ell'}(s)), \bar{u}_{\ell'}(s) \rangle + \|\Pi_{m_{\ell'}} B \bar{u}_{\ell'}(s) \|_{L_2(U,H)}^2 \Big) ds\\
& + \E \int_0^T 2e^{-Ks} \langle a_{\ell'}(\bar{u}_{\ell'}(s)), u_{\ell'}(s) - \bar{u}_{\ell'}(s) \rangle ds.
\end{split}
\end{equation*}
Let
\begin{equation*}
I_{1,\ell'} := \E \int_0^T \langle a_{\ell'}(\bar{u}_{\ell'}(s)), u_{\ell'}(s) - \bar{u}_{\ell'}(s) \rangle ds.
\end{equation*}
Using H\"older's inequality results in
\begin{equation*}
I_{1,\ell'} \leq \bigg(\E\int_0^T |a_\ell( \bar{u}_{\ell'}(s))|^2 ds \bigg)^{1/2}\bigg(\E\int_0^T |u_{\ell'}(s) - \bar{u}_{\ell'}(s)|^2 ds \bigg)^{1/2}.
\end{equation*}
Using Assumption~\ref{ass:projection} and Corollary~\ref{corollary:remaining_apriori} yields
\begin{equation*}
I_{1,\ell'} \leq c (\mathfrak{c}(m_{\ell'})\tau_{n_{\ell'}})^{1/2}.
\end{equation*}
Thus,
\begin{equation*}
\begin{split}
& e^{-Ks}\E |u_{\ell'}(T)|^2 \leq \E|u_{\ell'}(0)|^2 -K \E \int_0^T e^{-Ks}|u_{\ell'}(s)|^2 ds\\
& + \E \int_0^T e^{-Ks}\big( 2 \langle a_{\ell'}(\bar{u}_{\ell'}(s)), \bar{u}_{\ell'}(s) \rangle + \|B \bar{u}_{\ell'}(s)\|_{L_2(U,H)}^2 \big) ds
+ c(\mathfrak{c}(m_{\ell'})\tau_{n_{\ell'}})^{1/2}
\end{split}
\end{equation*}
and one may proceed with a monotonicity argument.
Let $w\in \mathcal{L}^p(V)$. Then
\begin{equation*}
\begin{split}
&e^{-KT}\E|u_{\ell'}(T)|^2 \leq \E|u_{\ell'}(0)|^2-K \E\int_0^T e^{-Ks}|u_{\ell'}(s)|^2 ds\\
&\quad + \E \int_0^T e^{-Ks}\big[ 2 \langle a_{\ell'}(\bar{u}_{\ell'}(s)) - a_{\ell'}(w(s)), \bar{u}_{\ell'}(s) - w(s)\rangle \\
& \quad \quad + 2 \langle a_{\ell'}(w(s)), \bar{u}_{\ell'}(s)) - w(s)\rangle + 2\langle a_{\ell'}(\bar{u}_{\ell'}(s)), w(s) \rangle \\
& \quad \quad + 2(Bw(s),B\bar{u}_{\ell'}(s))_{L_2(U,H)\times L_2(U,H)} - \|Bw(s)\|_{L_2(U,H)}^2\\
& \quad \quad + \|B\bar{u}_{\ell'}(s)) - Bw(s)\|_{L_2(U,H)}^2 \big] ds
+  c(\mathfrak{c}(m_{\ell'})\tau_{n_{\ell'}})^{1/2}.
\end{split}
\end{equation*}
Using the Monotonicity assumption on $A$ one obtains
\begin{equation}
\label{eq:main_thm_proof1}
\begin{split}
&e^{-KT}\E|u_{\ell'}(T)|^2 \leq \E|u_{\ell'}(0)|^2\\
& + K\E\int_0^T e^{-Ks} \big(|\bar{u}_{\ell'}(s)|^2 -|u_{\ell'}(s)|^2 -2(\bar{u}_{\ell'}(s),w(s)) + |w(s)|^2 \big) ds \\
&+ \E \int_0^T  e^{-Ks} \langle A_{2,\ell'}\bar{u}_{\ell'}(s) - A_2 \bar{u}_{\ell'}(s), \bar{u}_{\ell'}(s) - w(s)\rangle ds\\
&  + \E \int_0^T  e^{-Ks} \langle A_2 w(s) - A_{2,\ell'}w(s), \bar{u}_{\ell'}(s) - w(s)\rangle ds\\
& + \E \int_0^T  e^{-Ks} \big[ 2 \langle a_{\ell'}(w(s)), \bar{u}_{\ell'}(s)) - w(s)\rangle\\
& \quad\quad + 2\langle a_{\ell'}(\bar{u}_{\ell'}(s)), w(s) \rangle \\
& \quad\quad + 2(Bw(s),B\bar{u}_\ell(s))_{L_2(U,H)\times L_2(U,H)} - \|Bw(s)\|_{L_2(U,H)}^2 \big] ds\\
& + c(\mathfrak{c}(m_{\ell'})\tau_{n_{\ell'}})^{1/2}.
\end{split}
\end{equation}
Taking limit inferior as $\ell' \to \infty$, 
using the weak lower-semi-continuity of the norm,
Lemma~\ref{lemma:weak_limits_from_compactness},
Lemma~\ref{lemma:strong_conv_of_tam_correction} 
and Corollary~\ref{corollary:remaining_apriori},
one observes that
\begin{equation}
\label{eq:main_thm_proof2}
\begin{split}
& e^{-KT}\E|u(T)|^2 \leq \E|u_0|^2 + K\E\int_0^T e^{-Ks} \big[ -2(u(s),w(s)) + |w(s)|^2 \big] ds \\
& + \E\int_0^T e^{-Ks}\big[ 2 \langle A w(s), u(s) - w(s) \rangle + 2\langle a^\infty(s), w(s)  \rangle  \\
&  \quad\quad\quad + 2(Bw(s), b^\infty(s))_{L_2(U,H)\times L_2(U,H)} - \|Bw(s)\|_{L_2(U,H)}^2 \big] ds.
\end{split}
\end{equation}
Applying It\^o's formula to~\eqref{eq:limit_equation1} and taking expectations yields
\begin{equation}
\label{eq:main_thm_proof3}
\begin{split}
e^{-KT}\E|u(T)|^2 = & \E|u(0)|^2 - K\E \int_0^T e^{-Ks} |u(s)|^2 ds\\
& + \E\int_0^T e^{-Ks} \big[2\langle a^\infty(s),u(s)\rangle + \|b^\infty(s)\|_{L_2(U,H)}^2 \big]ds
\end{split}
\end{equation}
Subtracting this from~\eqref{eq:main_thm_proof2} one arrives at
\begin{equation}
\label{eq:main_thm_proof4}
\begin{split}
0 \leq  \E\int_0^T &  e^{-Ks}\big[  K|u(s)-w(s)|^2 + 2 \langle A w(s), u(s) - w(s) \rangle  \\
& + 2 \langle a^\infty(s), w(s) - u(s)\rangle - \|Bu(s) - b^\infty(s)\|_{L_2(U,H)}^2 \big]ds.
\end{split}
\end{equation}
Note that so far $w$ was arbitrary.
It will now be used  to identify the nonlinear terms.
First, one takes $w=u$ and observes that,
\begin{equation*}
0 \leq - \E\int_0^T  e^{-Ks} \|Bu(s) - b^\infty(s)\|_{L_2(U,H)}^2 ds \leq 0
\end{equation*}
which implies $b^\infty = Bu$.
Next, one sets $w=u+\epsilon z$ with $\epsilon > 0$ 
and $z\in \mathcal{L}^p(V)$ in~\eqref{eq:main_thm_proof4}.
Dividing by $\epsilon > 0$ leads to
\begin{equation*}
0 \leq \E\int_0^T e^{-Ks} \big[ K\epsilon|z|^2+ 2 \langle A (u(s)+\epsilon z(s)), -z(s) \rangle + 2 \langle a^\infty(s), z(s)\rangle \big]ds.
\end{equation*}
Using hemicontinuity of $A$ while letting $\epsilon \to 0$ results in
\begin{equation*}
\E\int_0^T \langle A u(s), z(s) \rangle ds \leq \E\int_0^T \langle a^\infty(s), z(s)\rangle ds.
\end{equation*}
This holds for an arbitrary $z\in \mathcal{L}^p(V)$ and hence, 
in particular, for $-z$. 
Thus one obtains that $a^\infty = Au$.

Due to Theorem~\ref{thm:uniqueness}, the solution $u$ to~\eqref{eq:see} is unique.
Thus the whole sequences of approximations converges rather than just the subsequence denoted by $\ell'$.

Finally, in order to show that $u_\ell(T) \to u(T)$ in $L^2(\Omega;H)$,
one uses~\eqref{eq:main_thm_proof1} and~\eqref{eq:main_thm_proof2}
with $w=u$ together with $a^\infty = Au$ and $b^\infty = Bu$.
Consequently, the weak-lower-semi-continuity of the norm and
Lemma~\ref{lemma:weak_limits_from_compactness} lead to
\begin{equation*}
\begin{split}
e^{-KT}\E|u(T)|^2  \leq & \liminf_{\ell\to \infty} e^{-KT} \E |u_\ell(T)|^2 \leq \E|u_0|^2 -K \E \int_0^T e^{-Ks} |u(s)|^2 ds \\
&  + \E \int_0^T e^{-Ks}\big[2\langle Au(s),u(s) \rangle + \|Bu(s)\|_{L_2(U,H)}^2 \big]ds.
\end{split}
\end{equation*}
Thus, due to~\eqref{eq:main_thm_proof3},
\begin{equation*}
0 \leq \liminf_{\ell\to \infty} \E |u_\ell(T)|^2 - \E|u(T)|^2 \leq 0.
\end{equation*}
From Lemma~\ref{lemma:weak_limits_from_compactness} and Lemma~\ref{lemma:limit_equation}, one already knows that $u_\ell(T) \weaklyto u(T)$ in $L^2(\Omega;H)$.
This is a uniformly convex space (as it is a Hilbert space). 
Thus one concludes that $u_\ell(T) \to u(T)$ in $L^2(\Omega;H)$.
For this see, e.g., Br\'ezis~\cite[Proposition 3.32]{brezis:functional}.
\end{proof}

\section{Examples}
\label{sec:examples}
In this section we give examples of three equations which fit into our 
framework.
In all three examples the interpolation inequality is a consequence of the Gagliardo--Nirenberg inequality (see, for example,~\cite[Theorem 1.24]{roubicek:nonlinear}).
The first example is the  equation:
\begin{equation*}
du = \left[\nabla a(\nabla u)  - |u|^{p-2} u\right]dt + u dW\,\, \textrm{ on } \,\, \mathscr{D}\times (0,T)
\end{equation*}
with $u=0$ on the boundary of the domain $\mathscr{D}$ 
and $u(\cdot,0) = u_0$ given.
Here $a:\R^d \to \R^d$ can be nonlinear 
but it is assumed to be continuous, monotone and growing at most linearly.
If we take $a_i(z) = z_i$ then $\nabla a(\nabla u) = \Delta u$ and this equation
is the stochastic Ginzburg--Landau equation.
An example of a nonlinear function is 
$a_i(z) = \tfrac{2+\exp(-z_i)}{1+\exp(-z_i)}$. 
Moreover $\mathscr{D}$ is a bounded Lipschitz domain in $\R^d$, $d=1,2,3$ and $p\in [2,6)$ if $d=1$, $p\in [2,4)$ if $d=2$ and $p\in [2,10/3)$ if $d=3$.
In our framework $H=L^2(\mathscr{D})$, $V_1 = H^1_0(\mathscr{D})$ and $V_2 = L^p(\mathscr{D})$ (using the standard notation for Lebesgue and Sobolev spaces).

The second is the stochastic Swift--Hohenberg equation:
\begin{equation*}
du = \big[\left(\gamma^2 - (1+\Delta)^2\right)u - |u|^{p-2}u\big]dt + dW\,\, \textrm{ on } \,\, \mathscr{D}\times (0,T)
\end{equation*}
with appropriate boundary and initial conditions.
The domain $\mathscr{D}$ is assumed to be a bounded Lipschitz domain in $\R^2$.
With Dirichlet boundary conditions we would take $V_1 = H^2_0(\mathscr{D})$ 
and $V_2 = L^p(\mathscr{D})$ with $p\in [2,6)$.

The third example is the spatially extended stochastic FitzHugh--Nagumo system for signal propagation in nerve cells (originally stated by FitzHugh~\cite{fitzhugh:impulses} as a system of ordinary differential equations, see Bonaccorsi and Mastrogiacomo~\cite{bonaccorsi:analysis} for mathematical analysis of the spatially extended stochastic version):
\begin{equation*}
\begin{array}{ll}
du & = (\Delta u + u - u^3 - v)dt + dW\\
dv & = c_1(u - c_2 v + c_3)dt
\end{array}
\,\, \textrm{ on } \,\, (0,1)\times (0,T),
\end{equation*}
together with appropriate initial data for $u$ and $v$ as well as homogeneous Neumann boundary conditions for $u$ only.
In this situation $V_1 = H^1((0,1))\times L^2((0,1))$ while $V_2 = L^4((0,1))\times L^2((0,1))$.

We now provide estimates on the constant $\mathfrak{c}(m)$ in the particular 
case when $\mathscr{D} = (0,\pi)^2 \subset \R^2$ and
we use a spectral Galerkin method to construct the spaces $V_m$.
To that end define 
\begin{equation*}
\varphi_{n_1 n_2}(x_1,x_2) := \frac{2}{\pi}\sin(n_1 x_1)\sin(n_2 x_2). 	
\end{equation*}
Let $V_m = \textrm{span}\{\varphi_{n_1 n_2} : n_1 = 1,\ldots, m, 
n_2 = 1,\ldots,m\}$.
Then 
\begin{equation*}
\Pi_m f := 
\sum_{n_1=1, n_2=1}^m \langle f, \varphi_{n_1 n_2} \rangle \varphi_{n_1 n_2}
\end{equation*}
satisfies Assumption~\ref{ass:projection}.
Moreover we can calculate 
\begin{equation*}
\begin{split}
\mathfrak{c}(m) & = \sum_{n_1=1, n_2=1}^m 
\left(
\|\varphi_{n_1 n_2}\|_{L^2(\mathscr{D})}^2 
+ \|\nabla \varphi_{n_1 n_2}\|_{L^2(\mathscr{D};\R^2)}^2
+ \|\varphi_{n_1 n_2}\|_{L^p(\mathscr{D})}^{2/p} 
\right)	\\
& = m^2 \left(1 + 2 + c_p\right),	
\end{split}
\end{equation*}
where $c_p$ depends only on $p$.
Hence, in order to apply Theorem~\ref{thm:main}, we need a sequence 
$(m_\ell, n_\ell, k_\ell)$ such that $\tfrac{m_\ell^2}{n_\ell} \to 0$ 
as $\ell \to \infty$. 
This means that we need to choose $n_\ell = \lfloor m_\ell^{2+\delta} \rfloor$
for some $\delta > 0$. 

We also note that if $\mathscr{D} = (0,\pi)^d$ then an analogous construction
of $V_m$ would lead to the conclusion that we need 
$n_\ell = \lfloor m_l^{d+\delta}\rfloor$ for some $\delta > 0$.
Crucially we see that the space-time coupling requirement is no more onerous
than in the case of equations with operators growing at most linearly.

\section*{Acknowledgement}
The authors would like to thank the referees for their comments on the paper.

\end{document}